 \def\newblock{\ }%
\newtheorem{theorem}{Theorem}
\newtheorem{lemma}{Lemma}
\newtheorem{algorithm}{Algorithm}
\newtheorem{remark}{Remark}
\newtheorem{definition}{Definition}
\newtheorem{corollary}{Corollary}
\newcommand{\rd}{\, \mathrm{d}}
\newcommand{\bszero}{\boldsymbol{0}}
\newcommand{\bsone}{\boldsymbol{1}}
\newcommand{\bsc}{\boldsymbol{c}}
\newcommand{\bsd}{\boldsymbol{d}}
\newcommand{\bsk}{\boldsymbol{k}}
\newcommand{\bsl}{\boldsymbol{l}}
\newcommand{\bsx}{\boldsymbol{x}}
\newcommand{\bsy}{\boldsymbol{y}}
\newcommand{\bsalpha}{\boldsymbol{\alpha}}
\newcommand{\bsbeta}{\boldsymbol{\beta}}
\newcommand{\bsgamma}{\boldsymbol{\gamma}}
\newcommand{\bstau}{\boldsymbol{\tau}}
\newcommand{\Dcal}{\mathcal{D}}
\newcommand{\Scal}{\mathcal{S}}
\newcommand{\FF}{\mathbb{F}}
\newcommand{\NN}{\mathbb{N}}
\newcommand{\RR}{\mathbb{R}}
\newcommand{\ZZ}{\mathbb{Z}}
\newcommand{\wal}{\mathrm{wal}}
\newcommand{\wor}{\mathrm{wor}}
\newcommand{\tr}{\mathrm{tr}}
\begin{document}

\title{Richardson extrapolation allows truncation of higher order digital nets and sequences\thanks{This work was supported by JSPS Grant-in-Aid for Young Scientists No.~15K20964.}}

\author{Takashi Goda\thanks{School of Engineering, The University of Tokyo, 7-3-1 Hongo, Bunkyo-ku, Tokyo 113-8656, Japan (\tt{goda@frcer.t.u-tokyo.ac.jp})}}

\date{\today}

\maketitle

\begin{abstract}
We study numerical integration of smooth functions defined over the $s$-dimensional unit cube. A recent work by \citet{DGYxx} has introduced so-called extrapolated polynomial lattice rules, which achieve the almost optimal rate of convergence for numerical integration and can be constructed by the fast component-by-component search algorithm with smaller computational costs as compared to interlaced polynomial lattice rules. In this paper we prove that, instead of polynomial lattice point sets, truncated higher order digital nets and sequences can be used within the same algorithmic framework to explicitly construct good quadrature rules achieving the almost optimal rate of convergence. The major advantage of our new approach compared to original higher order digital nets is that we can significantly reduce the precision of points, i.e., the number of digits necessary to describe each quadrature node. This finding has a practically useful implication when either the number of points or the smoothness parameter is so large that original higher order digital nets require more than the available finite-precision floating point representations.
\end{abstract}

\section{Introduction}
In this paper we study numerical integration of multivariate functions defined over the $s$-dimensional unit cube. For an integrable function $f\colon [0,1)^s\to \RR$, we denote the integral of $f$ by
\begin{equation*}
I(f) = \int_{[0,1)^s}f(\bsx)\rd \bsx.
\end{equation*}
We consider approximating $I(f)$ by a linear algorithm of the form
\begin{equation*}
A_{N}(f) = \sum_{h=0}^{N-1}w_hf(\bsx_h), 
\end{equation*}
where $\bsx_0,\ldots,\bsx_{N-1}$ and $w_0,\ldots,w_{N-1}$ denote the quadrature nodes and the quadrature weights, respectively. We call the algorithm $A_{N}$ a \emph{quasi-Monte Carlo (QMC) rule} if the weights are given by $w_0=\cdots =w_{N-1}=1/N$.

For a Banach space $V$ with norm $\| \cdot\|_{V}$, the worst-case error of $A_{N}$ is defined by
\begin{equation*}
e^{\wor}(A_{N}, V) := \sup_{\substack{f\in V\\ \|f\|_{V}\leq 1}}\left| I(f)-A_{N}(f)\right|.
\end{equation*}
Our aim is then to design a good quadrature rule $A_{N}$ such that $e^{\wor}(A_{N}, V)$ is made as small as possible, since for any function $f\in V$ we have
\begin{equation*}
\left| I(f)-A_{N}(f)\right| \leq \| f\|_{V}\cdot e^{\wor}(A_{N}, V),
\end{equation*}
meaning that a single algorithm works well for all functions belonging to $V$. In this paper we are particularly interested in Banach spaces with dominating mixed smoothness $\alpha\in \NN$, $\alpha\geq 2$, consisting of functions which have partial mixed derivatives up to order $\alpha$ in each variable (see Section~\ref{subsec:sobolev} for more details). Such function spaces have been motivated by \citet{DKLNS14} for the study of partial differential equations with random coefficients. 

For function spaces of our interest, QMC rules using \emph{higher order digital nets and sequences} as quadrature nodes are known to achieve the almost optimal rate of convergence of the worst-case error, which is $O(N^{-\alpha+\varepsilon})$ with arbitrarily small $\varepsilon>0$. The concept and explicit construction of higher order digital nets and sequences were originally introduced by \citet{D07,D08} (see Section~\ref{subsec:ho_nets} for more details). Since then, on the one hand, further theoretical investigations on them have been made \citep[see, e.g.,][]{BDP11,HMOT16,GSY17,GSY18}. On the other hand, how to efficiently search for good quadrature node sets in a weighted function space setting as considered by \citet{SW98} has also attracted some interest \citep[see, e.g.,][]{BDLNP12,GD15,G15,GS16,GSY16}. In particular, so-called interlaced polynomial lattice rules originated by \citet{GD15} and \citet{G15}, which are based on the digit interlacing composition due to \citet{D07,D08}, have been applied in the context of partial differential equations with random coefficients \citep[see, e.g.,][]{DKLNS14,KN16}.

Recently, a new alternative approach to interlaced polynomial lattice rules has been developed by \citet{DGYxx}. Instead of searching for a single interlaced polynomial lattice point set, their approach is to search for $\alpha$ classical polynomial lattice point sets with geometric spacing of $N$ first, and then to apply Richardson extrapolation recursively to $\alpha$ numerical values $A_{N}(f)$. Such \emph{extrapolated polynomial lattice rules} have been proved to achieve the almost optimal rate of convergence, and, moreover, the fast component-by-component algorithm can be used to find good rules with smaller computational costs as compared to interlaced polynomial lattice rules. A further advantage can be found in the fact that the fast QMC matrix-vector multiplication technique from \citet{DKLS15} applies to extrapolated polynomial lattice rules, whereas it is not straightforwardly applicable to interlaced ones.

In this paper, as a continuation of \citet{DGYxx}, we push forward the idea of applying Richardson extrapolation to QMC rules for achieving a high order of convergence for multivariate numerical integration. In particular, we consider QMC rules using \emph{truncated} higher order digital nets or sequences as quadrature nodes, where truncation is done in the following way: we apply the following map $\tr_m \colon [0,1)\to [0,1)$ component-wise to each node $\bsx_h=(x_{h,1}.\ldots,x_{h,s})\in [0,1)^s$ of higher order digital nets with prime base $p$ and size $N=p^m$:
\begin{equation}\label{eq:trunction}
 \tr_m\left(\sum_{i=1}^{\infty}\frac{\xi_i}{p^i} \right)= \sum_{i=1}^{m}\frac{\xi_i}{p^i} \quad \text{with $\xi_i\in \{0,1,\ldots,p-1\}$.} 
\end{equation}
Then we prove that, by applying Richardson extrapolation recursively to QMC rules using such truncated higher order digital nets or sequences with geometric spacing of $N$, the resulting linear algorithm to approximate $I(f)$ achieves the almost optimal rate of convergence.

Our finding has the following practically useful implication, especially when $p=2$. The original digit interlacing composition approach to constructing higher order digital nets with size $N=p^m$ requires $\alpha m$ digits in the $p$-adic expansion of each component of each node. Hence the round-off error is inevitable when $\alpha m$ is larger than what is available via finite-precision floating point representations (for instance, 23 and 52 for IEEE 754 single- and double-precision floating-point formats, respectively). Depending on an integrand, the round-off error becomes comparable to the approximation error for numerical integration already when $m$ is of practical size, say $m\approx 20$. In such a situation, the approximation error will remain more or less unchanged even by increasing $m$. Since our extrapolation approach can reduce the necessary number of digits from $\alpha m$ to $m$, the round-off error problem will not happen until $m$ is large enough and importantly becomes independent of the smoothness parameter $\alpha$. Therefore, with the help of Richardson extrapolation, higher order QMC rules become available for wider ranges of $N$ and $\alpha$ than before without suffering from the rounding problem.

The rest of this paper is organized as follows. After describing the necessary background and notation in Section~\ref{sec:pre}, we propose an extrapolation-based quadrature rule using truncated higher order digital nets or sequences, and prove the worst-case error bound of the proposed rule in Banach spaces with dominating mixed smoothness in Section~\ref{sec:main}. In the same section, we further provide another possible, similar but different quadrature rule, together with its worst-case error bound. We conclude this paper with numerical experiments in Section~\ref{sec:exp}.

\section{Preliminaries}\label{sec:pre}
Throughout this paper we denote the set of positive integers by $\NN$ and write $\NN_0=\NN\cup \{0\}$. For a prime $p$, let $\FF_p$ be the finite field with $p$ elements, which is identified with the set of integers $\{0,1,\ldots, p-1\}\subset \ZZ$ equipped with addition and multiplication modulo $p$. For an $s$-dimensional vector $\bsx=(x_1,\ldots,x_s)$ and a subset $u\subseteq \{1,\ldots,s\}$, we write $\bsx_u=(x_j)_{j\in u}$, and denote the cardinality and the complement of $u$ by $|u|$ and $-u:=\{1,\ldots,s\}\setminus u$, respectively.

\subsection{Banach spaces with dominating mixed smoothness}\label{subsec:sobolev}
Following \citet{DKLNS14}, here we introduce the definition of function spaces which we consider in this paper.  Let $\alpha\in \NN$, $\alpha\geq 2$, and $1\leq q,r \leq \infty$ be real numbers. Further let $\bsgamma=(\gamma_u)_{u\subseteq \{1,\ldots,s\}}$ be a set of non-negative real numbers called weights, which has been introduced by \citet{SW98} to moderate the relative importance of different variables or groups of variables. In this paper we do not discuss the dependence of the worst-case error on the dimension, and just consider the weights for making consistent use of the notations of previous works. 

Assume that a function $f\colon [0,1)^s\to \RR$ has partial mixed derivatives up to order $\alpha$ in each variable. We define the norm of $f$ by
\begin{equation*}
\|f\|^r_{s,\alpha,q, r} := \sum_{u\subseteq \{1,\ldots,s\}}\left( \gamma_u^{-q}\sum_{v\subseteq u}\sum_{\bstau_{u\setminus v}\in \{1,\ldots,\alpha\}^{|u\setminus v|}} \int_{[0,1)^{|v|}}\left| \int_{[0,1)^{s-|v|}}f^{(\bstau_{u\setminus v},\bsalpha_{v},\bszero)}(\bsx)\rd \bsx_{-v}\right|^q \rd \bsx_v\right)^{r/q},
\end{equation*}
with the obvious modification if either $q$ or $r$ is infinite. Here $(\bstau_{u\setminus v},\bsalpha_{v},\bszero)$ denotes the vector $\bsbeta=(\beta_1,\ldots,\beta_s)$ such that
\begin{equation*} \beta_j= \begin{cases} 
\tau_j & \text{if $j\in u\setminus v$,} \\
\alpha & \text{if $j\in v$,} \\
0 & \text{otherwise,}
\end{cases}
\end{equation*}
and $f^{(\bstau_{u\setminus v},\bsalpha_{v},\bszero)}$ denotes the partial mixed derivative of order $(\bstau_{u\setminus v},\bsalpha_{v},\bszero)$ of $f$. If there exist subsets $u$ such that $\gamma_u=0$, then we assume that the corresponding inner double sum is 0 and formally set $0/0=0$. Now we define the Banach space with dominating mixed smoothness $\alpha$ by
\begin{equation*}
W_{s,\alpha,q,r} := \left\{ f\colon [0,1)^s\to \RR : \|f\|_{s,\alpha,q, r}<\infty \right\}.
\end{equation*}

For $\tau\in \NN$, we denote the Bernoulli polynomial of degree $\tau$ by $B_{\tau}\colon [0,1)\to \RR$ and we put $b_{\tau}(\cdot)=B_{\tau}(\cdot)/\tau!$. With a slight abuse of notation, we write $b_{\tau}=b_{\tau}(0)$. Further, we denote the one-periodic extension of the polynomial $b_{\tau}$ by $\tilde{b}_{\tau}\colon \RR\to \RR$. As shown below, we have a point-wise representation for functions in $W_{s,\alpha,q,r}$.
\begin{lemma}\label{lem:func_decomp}
For $f\in W_{s,\alpha,q,r}$, we have
\begin{equation*}
f(\bsx) = \sum_{u\subseteq \{1,\ldots,s\}}f_u(\bsx_u),
\end{equation*}
where each $f_u$ depends only on $\bsx_u$ and is given by
\begin{equation*}
f_u(\bsx_u) = \sum_{v\subseteq u} (-1)^{(\alpha+1)|v|}\sum_{\bstau_{u\setminus v}\in \{1,\ldots,\alpha\}^{|u\setminus v|}}\prod_{j\in u\setminus v}b_{\tau_j}(x_j)\int_{[0,1)^s}f^{(\bstau_{u\setminus v}, \bsalpha_v,\bszero)}(\bsy)\prod_{j\in v}\tilde{b}_{\alpha}(y_j-x_j)\rd \bsy.
\end{equation*}
Moreover, we have
\begin{equation*}
\| f\|^r_{s,\alpha,q,r} = \sum_{u\subseteq \{1,\ldots,s\}}\|f_u\|^r_{s,\alpha,q,r}.
\end{equation*}
\end{lemma}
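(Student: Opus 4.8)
The plan is to establish the pointwise decomposition via a multivariate Taylor-type expansion based on the one-dimensional Bernoulli polynomial identity, then verify the norm splitting separately. Let me sketch how I'd attack each piece.

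The plan is to reduce the multivariate statement to a one-dimensional Taylor-type identity involving the Bernoulli polynomials $b_\tau$ and their periodisations $\tilde b_\tau$, and then to tensorise it across the $s$ coordinates. The univariate identity I would use is the Euler--Maclaurin/Peano-kernel expansion: for a sufficiently smooth $g\colon [0,1)\to\RR$,
\begin{equation*}
g(x) = \int_0^1 g(y)\rd y + \sum_{\tau=1}^{\alpha} b_\tau(x)\int_0^1 g^{(\tau)}(y)\rd y + (-1)^{\alpha+1}\int_0^1 \tilde b_\alpha(y-x)\, g^{(\alpha)}(y)\rd y,
\end{equation*}
which follows from repeated integration by parts using $b_\tau'=b_{\tau-1}$, $b_0\equiv 1$, and $\int_0^1 g^{(\tau)}(y)\rd y = g^{(\tau-1)}(1)-g^{(\tau-1)}(0)$ (one verifies it directly for $\alpha=1$ and induces on $\alpha$). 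Establishing this identity with the correct signs is the first step; the term with $\tau=\alpha$ and the remainder term carrying $\tilde b_\alpha$ are intentionally kept separate, as they will become the two distinct ``types'' of coordinate appearing in $f_u$.

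Next I would apply this identity successively in each variable $x_1,\dots,x_s$, which is legitimate since $f$ has mixed derivatives up to order $\alpha$ in every variable, so that one may differentiate and integrate under the integral sign by Fubini. Each application replaces the dependence on a single coordinate $x_j$ by exactly one of three mutually exclusive contributions: (a) the term $\int_0^1 (\cdot)\rd y_j$ that no longer depends on $x_j$; (b) a factor $b_{\tau_j}(x_j)$ with $\tau_j\in\{1,\dots,\alpha\}$ multiplying an integral of the $\tau_j$-th derivative; or (c) the remainder $(-1)^{\alpha+1}\int_0^1 \tilde b_\alpha(y_j-x_j)(\cdot)^{(\alpha)}\rd y_j$. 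After all $s$ applications one obtains a sum over assignments of one type to each coordinate. Collecting terms by letting $u$ be the set of coordinates of type (b) or (c) and $v\subseteq u$ the set of type-(c) coordinates, and noting that each type-(c) coordinate contributes a factor $(-1)^{\alpha+1}$ (hence $(-1)^{(\alpha+1)|v|}$ in total) while all type-(a) coordinates are integrated over $[0,1)$, reproduces exactly the stated expression for $f_u(\bsx_u)$; in particular $f_u$ depends only on $\bsx_u$.

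For the norm identity I would exploit the mean-zero properties $\int_0^1 b_\tau(x)\rd x=0$ for $\tau\ge 1$ and $\int_0^1 \tilde b_\alpha(y-x)\rd x=0$. Writing $T_u[\cdot]$ for the $u$-indexed summand in the definition of $\|\cdot\|^r_{s,\alpha,q,r}$, linearity of the inner derivative-and-integration operation gives $T_u[f]=\sum_w T_u[f_w]$. The claim is that $T_u[f_w]=0$ unless $w=u$: if $u\not\subseteq w$, then some $j\in u$ is differentiated to order $\ge 1$ while $f_w$ is independent of $x_j$, so the term vanishes; if $w\supsetneq u$, then some $j\in w\setminus u$ is differentiated to order $0$ and integrated over $x_j\in[0,1)$, and its associated factor $b_{\tau_j}(x_j)$ or $\tilde b_\alpha(y_j-x_j)$ integrates to zero. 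Hence $T_u[f]=T_u[f_u]$, and by the same vanishing $\|f_u\|^r_{s,\alpha,q,r}=\sum_w T_w[f_u]=T_u[f_u]$, so that $\|f\|^r_{s,\alpha,q,r}=\sum_u T_u[f]=\sum_u T_u[f_u]=\sum_u \|f_u\|^r_{s,\alpha,q,r}$.

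The routine but error-prone part is the sign and constant bookkeeping in the univariate identity and its iteration. The genuinely substantive point, and the step I expect to be the main obstacle, is verifying that the cross terms $T_u[f_w]$ with $w\neq u$ vanish: here the mean-zero property of the Bernoulli polynomials and Fubini must be combined carefully, ensuring that differentiating in the active coordinates does not interfere with the single coordinate being integrated out, and that the nested $\bsy$-integral defining the type-(c) factors can be interchanged with the outer integration over $x_j$.
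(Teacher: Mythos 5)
Your proposal is correct and takes essentially the same route as the proof the paper relies on: the paper gives no argument of its own but defers to \citet[Theorem~3.5]{DKLNS14}, whose proof is precisely this iterated univariate Bernoulli/Euler--Maclaurin identity (with the $\tau=\alpha$ term and the $\tilde{b}_{\alpha}$ remainder kept separate, producing the $(-1)^{(\alpha+1)|v|}$ sign) tensorised over the coordinates, followed by the mean-zero orthogonality argument showing the cross terms vanish in the norm. The step you single out as delicate---that $T_u[f_w]=0$ for $w\neq u$ via $\int_0^1 b_\tau(x)\rd x=0$ and $\int_0^1 \tilde{b}_{\alpha}(y-x)\rd x=0$ combined with Fubini---is exactly how the cited proof establishes the norm decomposition, so no gap remains beyond the routine bookkeeping you already acknowledge.
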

\begin{proof}
See the proof of \citet[Theorem~3.5]{DKLNS14}.
\end{proof}

\subsection{Higher order digital nets and sequences}\label{subsec:ho_nets}

\subsubsection{Digital construction scheme}
We first introduce a class of point sets called digital nets which are originally due to \citet{Nbook}.
\begin{definition}[Digital nets]\label{def:digital_net}
For a prime $p$ and $m,n\in \NN$, let $C_1,\ldots,C_s\in \FF_p^{n\times m}$. For $h\in \NN_0$, $h<p^m$, we denote the $p$-adic expansion of $h$ by
\begin{equation*}
h = \eta_0 + \eta_1 p+\cdots + \eta_{m-1}p^{m-1}.
\end{equation*}
Set $\bsx_h=(x_{h,1},\ldots,x_{h,s})\in [0,1)^s$ where
\begin{equation*}
x_{h,j}=\frac{\xi_{h,j,1}}{p}+\frac{\xi_{h,j,2}}{p^2}+\cdots+\frac{\xi_{h,j,n}}{p^n},
\end{equation*}
in which $\xi_{h,j,1},\ldots,\xi_{h,j,n}$ are given by
\begin{equation*}
(\xi_{h,j,1},\ldots,\xi_{h,j,n}) = (\eta_0,\eta_1,\ldots,\eta_{m-1})\cdot C_j^{\top}
\end{equation*}
for $1\leq j\leq s$. Then the set of points $P_{m,n}=\{\bsx_h : 0\leq h<p^m\}$ is called a digital net over $\FF_p$ (with generating matrices $C_1,\ldots,C_s$).
\end{definition}
\noindent It is easy to see from the definition that the parameter $m$ determines the total number of points, while $n$ does determine the precision of points.

\begin{remark}\label{rem:finiteness}
Let us consider the case $n=\infty$. For each $C_j=(c_{k,l}^{(j)})_{k\in \NN, 1\leq l\leq m}$, if there exists a function $K_j\colon \{1,\ldots,m\}\to \NN$ such that $c_{k,l}^{(j)}=0$ whenever $k>K_j(l)$, the vector-matrix product appearing in the above definition gives $\xi_{h,j,i}=0$ for all $i>\max_{1\leq l\leq m}K_j(l)=:n_j$. Thus, each number $x_{h,j}$ is uniquely written in a finite $p$-adic expansion with the precision at most $n'=\max_{1\leq j\leq s}n_j$. By identifying $C_1,\ldots,C_s$ with their upper $n'\times m$ submatrices, Definition~\ref{def:digital_net} still applies to such cases.
\end{remark}

It is straightforward to extend the definition of digital nets to digital sequences that are infinite sequences of points in $[0,1)^s$.
\begin{definition}[Digital sequences]\label{def:digital_seq}
For a prime $p$, let $C_1,\ldots,C_s\in \FF_p^{\NN\times \NN}$. For each $C_j=(c_{k,l}^{(j)})_{k,l\in \NN}$, assume that there exists a function $K_j\colon \NN\to \NN$ such that $c_{k,l}^{(j)}=0$ if $k>K_j(l)$. For $h\in \NN_0$, we denote the $p$-adic expansion of $h$ by
\begin{equation*}
h = \eta_0 + \eta_1 p+\cdots ,
\end{equation*}
where all but a finite number of $\eta_i$'s are 0.
Set $\bsx_h=(x_{h,1},\ldots,x_{h,s})\in [0,1)^s$ where
\begin{equation*}
x_{h,j}=\frac{\xi_{h,j,1}}{p}+\frac{\xi_{h,j,2}}{p^2}+\cdots,
\end{equation*}
in which $\xi_{h,j,1},\xi_{h,j,2},\ldots$ are given by
\begin{equation*}
(\xi_{h,j,1},\xi_{h,j,2},\ldots) = (\eta_0,\eta_1,\ldots)\cdot C_j^{\top}
\end{equation*}
for $1\leq j\leq s$. Then the sequence of points $\Scal=\{\bsx_h : h\in \NN_0\}$ is called a digital sequence over $\FF_p$ (with generating matrices $C_1,\ldots,C_s$).
\end{definition}
\noindent As mentioned in Remark~\ref{rem:finiteness}, the existence of functions $K_j$ in this definition is assumed to ensure that every number $x_{h,j}$ is uniquely written in a finite $p$-adic expansion.

\subsubsection{Dual nets}
Next we introduce the concept of dual nets and also the weight function due to \citet{D08} which generalizes the original weight function introduced independently by \citet{N86} and \citet{RT97}. Thereafter we give the definition of higher order digital nets and sequences.
\begin{definition}[Dual nets]\label{def:dual_net}
For a prime $p$ and $m,n\in \NN$, let $P_{m,n}$ be a digital net over $\FF_p$ with generating matrices $C_1,\ldots,C_s\in \FF_p^{n\times m}$. The dual net of $P_{m,n}$, denoted by $P_{m,n}^{\perp}$, is defined by
\begin{equation*}
P_{m,n}^{\perp}:= \left\{ \bsk=(k_1,\ldots,k_s)\in \NN_0^s : C_1^{\top}\nu_n(k_1)\oplus \cdots \oplus C_s^{\top}\nu_n(k_s)=\bszero \in \FF_p^m\right\},
\end{equation*}
where we write $\nu_n(k)=(\kappa_0,\ldots,\kappa_{n-1})^{\top}$ for $k\in \NN_0$ whose $p$-adic expansion is given by $k= \kappa_0 + \kappa_1 p + \cdots$, where all but a finite number of $\kappa_i$'s are 0.
\end{definition}

\begin{remark}\label{rem:finiteness2}
Again, even for the case $n=\infty$, as long as there exists a function $K_j\colon \{1,\ldots,m\}\to \NN$ such that $c_{k,l}^{(j)}=0$ whenever $k>K_j(l)$ for each $C_j=(c_{k,l}^{(j)})_{k,l\in \NN}$, Definition~\ref{def:dual_net} still applies.
\end{remark}

\begin{definition}[Weight function]
Let $\alpha\in \NN$. We denote the $p$-adic expansion of $k\in \NN$ by
\begin{equation*}
k= \kappa_1 p^{c_1-1}+\kappa_2p^{c_2-1}+\cdots + \kappa_v p^{c_v-1}
\end{equation*}
with $\kappa_1,\ldots,\kappa_v\in \{1,\ldots,p-1\}$ and $c_1>c_2>\cdots >c_v>0$. Then we define the weight function $\mu_{\alpha}\colon \NN_0\to \NN_0$ by
\begin{equation*}
\mu_{\alpha}(k):=\sum_{i=1}^{\min(\alpha,v)}c_i,
\end{equation*}
and $\mu_{\alpha}(0)=0$. In case of vectors in $\NN_0^s$, we define
\begin{equation*}
\mu_{\alpha}(k_1,\ldots,k_s):=\sum_{j=1}^{s}\mu_{\alpha}(k_j). 
\end{equation*}
\end{definition}

Now we are ready to introduce higher order digital nets and sequences.
\begin{definition}[Higher order digital nets]\label{def:HO_digital_net}
Let $\alpha\in \NN$. For a prime $p$ and $m,n\in \NN$, let $P_{m,n}$ be a digital net over $\FF_p$. We call $P_{m,n}$ an \emph{order $\alpha$ digital $(t,m,s)$-net} over $\FF_p$ if there exists an integer $0\leq t\leq \alpha m$ such that the following holds:
\begin{equation*}
\mu_{\alpha}(P_{m,n}^{\perp}):= \min_{\bsk\in P_{m,n}^{\perp}\setminus \{\bszero\}}\mu_{\alpha}(\bsk)>\alpha m-t. 
\end{equation*}
\end{definition}
\begin{remark}\label{rem:precision}
It follows from Definition~\ref{def:dual_net} that $(p^n,0,\ldots,0)\in P_{m,n}^{\perp}$, which gives
\begin{equation*}
\mu_{\alpha}(P_{m,n}^{\perp})\leq \mu_{\alpha}(p^n,0,\ldots,0)=n+1.
\end{equation*}
Thus in order for $P_{m,n}$ to be an order $\alpha$ digital $(t,m,s)$-net, it is necessary to have $n\geq \alpha m-t$. Together with Remark~\ref{rem:digit_interlacing} below, this means that the precision $n$ should scale linearly with $\alpha$ and $m$.
\end{remark}
\begin{definition}[Higher order digital sequences]\label{def:HO_digital_seq}
Let $\alpha\in \NN$. For a prime $p$, let $\Scal$ be a digital sequence over $\FF_p$. We call $\Scal$ an \emph{order $\alpha$ digital $(t,s)$-sequence} over $\FF_p$ if there exists $t\in \NN_0$ such that the first $p^m$ points of $\Scal$ are an order $\alpha$ digital $(t,m,s)$-net over $\FF_p$ when $\alpha m>t$.
\end{definition}

\subsubsection{Explicit constructions}
It is important to note that higher order digital nets and sequences can be constructed explicitly. In fact, many explicit constructions of order 1 digital $(t,s)$-sequences with small $t$-values for arbitrary $s$ have been known already. Among them are those by \citet{S67}, \citet{F82}, \citet{N88}, \citet{T93} and \citet{NXbook}. Some of them hold the property on functions $K_j$ in Definition~\ref{def:digital_seq} such that $K_j(l)\leq l$ for all $j,l\in \NN$. This means, the first $p^m$ points of such digital sequences are an order 1 digital $(t,m,s)$-net over $\FF_p$ with the precision $n\leq m$. We refer to \citet[Chapter~8]{DPbook} for more information on these special constructions. 

Moreover the digit interlacing composition due to \citet{D07,D08} enables us to construct order $\alpha$ digital $(t,m,s)$-nets and $(t,s)$-sequences in the following way. For $\alpha\in \NN$, $\alpha\geq 2$, let us consider a generic point $\bsx=(x_1,\ldots,x_{\alpha})\in [0,1)^{\alpha}$. We denote the $p$-adic expansion of each $x_j$ by
\begin{equation*}
x_j = \frac{\xi_{j,1}}{p}+\frac{\xi_{j,2}}{p^2}+\cdots, 
\end{equation*}
which is understood to be unique in the sense that infinitely many of the $\xi_{j,i}$'s are different from $p-1$. Then we define the map $\Dcal_{\alpha}\colon [0,1)^{\alpha}\to [0,1)$ by
\begin{equation*}
\Dcal_{\alpha}(x_1,\ldots,x_{\alpha}) := \sum_{i=1}^{\infty}\sum_{j=1}^{\alpha}\frac{\xi_{j,i}}{p^{\alpha(i-1)+j}}.
\end{equation*}
We extend the map $\Dcal_{\alpha}$ to vectors by setting
\begin{align*}
\Dcal_{\alpha} \colon [0,1)^{\alpha s} & \to [0,1)^s, \\
(x_1,\ldots,x_{\alpha s}) & \mapsto \left( \Dcal_{\alpha}(x_1,\ldots,x_\alpha),\ldots,\Dcal_{\alpha}(x_{\alpha(s-1)+1},\ldots,x_{\alpha s})\right),
\end{align*}
i.e., $\Dcal_{\alpha}$ is applied to non-overlapping consecutive $\alpha$ components of $(x_1,\ldots,x_{\alpha s})$. Using this digit interlacing composition $\Dcal_{\alpha}$, we can construct higher order digital nets and sequences explicitly as follows.

\begin{lemma}
Let $\alpha\in \NN$, $\alpha\geq 2$, and $p$ be a prime.
\begin{enumerate}
\item For $m\in \NN$, let $P_{m,m}$ be an order 1 digital $(t,m,\alpha s)$-net over $\FF_p$. Then
\begin{equation*}
\Dcal_{\alpha}(P_{m,m}) := \left\{ \Dcal_{\alpha}(\bsx) : \bsx\in P_{m,m}\right\} \subset [0,1)^s
\end{equation*}
is an order $\alpha$ digital $(t',m,s)$-net over $\FF_p$ with
\begin{equation*}
t' = \alpha \min\left\{ m, t+\left\lfloor \frac{s(\alpha-1)}{2}\right\rfloor\right\}.
\end{equation*}
\item Let $\Scal$ be an order 1 digital $(t,\alpha s)$-sequence over $\FF_p$. Then
\begin{equation*}
\Dcal_{\alpha}(\Scal) := \left\{ \Dcal_{\alpha}(\bsx) : \bsx\in \Scal\right\} \subset [0,1)^s
\end{equation*}
is an order $\alpha$ digital $(t',s)$-sequence over $\FF_p$ with
\begin{equation*}
t' = \alpha t+\frac{s\alpha (\alpha-1)}{2}.
\end{equation*}
\end{enumerate}
\end{lemma}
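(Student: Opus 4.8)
The plan is to pass to dual nets and reduce both parts to a single combinatorial inequality comparing the weight $\mu_{\alpha}$ of an interlaced index with the weight $\mu_1$ (the weight function with parameter $1$) of its de-interlaced counterpart. First I would record the action of $\Dcal_{\alpha}$ on generating matrices: writing $C_1,\ldots,C_{\alpha s}$ for the generating matrices of $P_{m,m}$ (resp.\ of $\Scal$), the net $\Dcal_{\alpha}(P_{m,m})$ has $j$-th generating matrix $D_j$ whose row $\alpha(i-1)+a$ equals row $i$ of $C_{\alpha(j-1)+a}$, because the digit of $\Dcal_{\alpha}(\bsx)$ at position $\alpha(i-1)+a$ is the $i$-th digit of the $(\alpha(j-1)+a)$-th input coordinate. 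A direct computation from Definition~\ref{def:dual_net} then gives $D_j^{\top}\nu(k_j)=\sum_{a=1}^{\alpha}C_{\alpha(j-1)+a}^{\top}\nu(\tilde{k}_{\alpha(j-1)+a})$, where $(\tilde{k}_{\alpha(j-1)+1},\ldots,\tilde{k}_{\alpha j})$ is the digit-wise de-interlacing of $k_j$. Summing over $j$ yields $\bigoplus_{j=1}^{s}D_j^{\top}\nu(k_j)=\bigoplus_{l=1}^{\alpha s}C_l^{\top}\nu(\tilde{k}_l)$, so that $\bsk\in \Dcal_{\alpha}(P_{m,m})^{\perp}$ if and only if its de-interlacing $\tilde{\bsk}$ lies in $P_{m,m}^{\perp}$, and likewise in the sequence case (the precision bookkeeping of Remarks~\ref{rem:finiteness} and \ref{rem:finiteness2} guarantees all these expansions are finite).

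The heart of the proof, and the step I expect to be the main obstacle, is the weight inequality
\begin{equation*}
\mu_{\alpha}(\bsk)\geq \alpha\,\mu_1(\tilde{\bsk})-\frac{s\alpha(\alpha-1)}{2}\qquad\text{for all }\bsk\in \NN_0^s,
\end{equation*}
which I would prove block by block. Fix $j$ and set $A=\{a:\tilde{k}_{\alpha(j-1)+a}\neq 0\}$. The leading nonzero digit of $\tilde{k}_{\alpha(j-1)+a}$ lies at exponent $\mu_1(\tilde{k}_{\alpha(j-1)+a})-1$, and interlacing sends it to a digit of $k_j$ whose weight-function parameter is $\alpha(\mu_1(\tilde{k}_{\alpha(j-1)+a})-1)+a$; as $a$ ranges over $A$ these $|A|\leq \alpha$ parameters are distinct. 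Since $\mu_{\alpha}(k_j)$ is the sum of the largest $\min(\alpha,\cdot)$ such parameters over the nonzero digits of $k_j$, it is at least the corresponding sum over $A$, giving
\begin{equation*}
\mu_{\alpha}(k_j)\geq \alpha\sum_{a\in A}\mu_1(\tilde{k}_{\alpha(j-1)+a})-\Bigl(\alpha|A|-\sum_{a\in A}a\Bigr).
\end{equation*}
As $A$ consists of distinct positive integers, $\sum_{a\in A}a\geq |A|(|A|+1)/2$, so the loss is at most $\alpha|A|-|A|(|A|+1)/2$, whose maximum over $0\leq |A|\leq\alpha$ equals $\alpha(\alpha-1)/2$; summing over $j$ gives the inequality. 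The delicate point is precisely the treatment of vanishing blocks, i.e.\ that the worst case of the loss is controlled by this maximization rather than naively by $\alpha$ per block.

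With this in hand, part (1) follows quickly. Any nonzero $\bsk\in \Dcal_{\alpha}(P_{m,m})^{\perp}$ has a nonzero de-interlacing $\tilde{\bsk}\in P_{m,m}^{\perp}$, so $\mu_1(\tilde{\bsk})\geq m-t+1$ by the order $1$ net hypothesis, and the weight inequality yields $\mu_{\alpha}(\bsk)\geq \alpha(m-t+1)-s\alpha(\alpha-1)/2$. If $t+\lfloor s(\alpha-1)/2\rfloor\geq m$ then $\alpha m-t'=0$ and $\mu_{\alpha}(\bsk)>0$ is immediate from $\bsk\neq\bszero$; otherwise a short estimate, in which $\alpha\geq 2$ absorbs the gap between $s\alpha(\alpha-1)/2$ and $\alpha\lfloor s(\alpha-1)/2\rfloor$, gives $\mu_{\alpha}(\bsk)>\alpha m-t'$ with $t'=\alpha(t+\lfloor s(\alpha-1)/2\rfloor)$. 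The two regimes together reproduce the stated $t'=\alpha\min\{m,t+\lfloor s(\alpha-1)/2\rfloor\}$, and by Definition~\ref{def:HO_digital_net} this is the desired order $\alpha$ net property.

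For part (2), the first $p^m$ points of $\Dcal_{\alpha}(\Scal)$ are exactly $\Dcal_{\alpha}$ applied to the first $p^m$ points of $\Scal$, which form an order $1$ digital $(t,m,\alpha s)$-net; hence the dual-net correspondence and the weight inequality apply verbatim for every admissible $m$. Here, however, a single $m$-independent bound is required, so I would retain the full loss term $s\alpha(\alpha-1)/2$ and conclude $\mu_{\alpha}(\bsk)>\alpha m-(\alpha t+s\alpha(\alpha-1)/2)$ uniformly in $m$, which is the claimed order $\alpha$ digital $(t',s)$-sequence property with $t'=\alpha t+s\alpha(\alpha-1)/2$ by Definition~\ref{def:HO_digital_seq}.
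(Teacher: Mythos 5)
Your proof is correct, but note that the paper does not actually prove this lemma: it defers entirely to \citet[Corollary~3.4]{BDP11} and \citet[Theorems~4.11 and 4.12]{D08}. What you have written is a self-contained reconstruction of essentially the argument in those references: the dual-net correspondence under digit interlacing, followed by the per-block weight inequality $\mu_{\alpha}(k_j)\geq \alpha\sum_{a}\mu_1(\tilde k_{\alpha(j-1)+a})-\alpha(\alpha-1)/2$, with the loss controlled by maximizing $\alpha|A|-|A|(|A|+1)/2$ over $0\leq |A|\leq\alpha$ (the maximum $\alpha(\alpha-1)/2$ is attained at $|A|=\alpha-1$ and $|A|=\alpha$). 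The key steps check out: the interlaced positions $\alpha(i_a-1)+a$ are distinct because they are distinct modulo $\alpha$; the bound $|A|\leq\min(\alpha,v)$, where $v$ is the number of nonzero digits of $k_j$, justifies comparing against the sum of the $\min(\alpha,v)$ largest positions; and in part (1) the slack $+\alpha$ coming from $\mu_1(\tilde{\bsk})\geq m-t+1$ indeed absorbs the gap $\alpha\bigl(s(\alpha-1)/2-\lfloor s(\alpha-1)/2\rfloor\bigr)<\alpha$, so both regimes of $t'=\alpha\min\{m,\,t+\lfloor s(\alpha-1)/2\rfloor\}$ come out right. Two small points deserve an explicit sentence in a written-up version: (i) de-interlacing is a digit-wise bijection, so $\bsk\neq\bszero$ forces $\tilde{\bsk}\neq\bszero$, and the order-$1$ bound $\mu_1(\tilde{\bsk})>m-t$ applies to every nonzero dual vector, including those divisible by $p^{\alpha m}$, so no separate case analysis for high digits is needed; (ii) in part (2) the net property is only required when $\alpha m>t'$, which forces $m>t$ and hence guarantees that the first $p^m$ points of $\Scal$ really do form an order $1$ digital $(t,m,\alpha s)$-net, making the reduction to the machinery of part (1) legitimate for every admissible $m$.
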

\begin{proof}
See \citet[Corollary~3.4]{BDP11} and \citet[Theorems~4.11 and 4.12]{D08} for the proofs of the first and second items, respectively. 
\end{proof}

\begin{remark}\label{rem:digit_interlacing}
Let $\Scal$ be an order 1 digital $(t,\alpha s)$-sequence over $\FF_p$ with generating matrices $C_1,\ldots,C_{\alpha s}\in \FF_p^{\NN\times \NN}$. We denote the $l$-th row of $C_j$ by $\bsc_{l}^{(j)}$. Then $\Dcal_{\alpha}(\Scal)$ is a digital sequence over $\FF_p$ with generating matrices $D_1,\ldots,D_s\in \FF_p^{\NN\times \NN}$, where each $D_j$ whose $l$-th row is denoted by $\bsd_l^{(j)}$ is given by
\begin{equation*}
\bsd_{\alpha(l-1)+h}^{(j)} = \bsc_{l}^{(\alpha(j-1)+h)}
\end{equation*}
for $l\geq 1$ and $1\leq h\leq \alpha$. If each $C_j=(c_{k,l}^{(j)})$ satisfies $c_{k,l}^{(j)}=0$ whenever $k>l$, i.e., if $K_j(l)\leq l$ holds, each $D_j=(d_{k,l}^{(j)})$ satisfies $d_{k,l}^{(j)}=0$ whenever $k>\alpha l$. This means, the first $p^m$ points of $\Dcal_{\alpha}(\Scal)$ are an order $\alpha$ digital $(t',m,s)$-net over $\FF_p$ with the precision $n\leq \alpha m$.

Since several explicit constructions of order 1 digital $(t,\alpha s)$-sequences, including those of \citet{S67} and \citet{T93}, fulfill the condition $K_j(l)\leq l$, we assume that the precision of the first $p^m$ points of an order $\alpha$ digital $(t,s)$-sequence over $\FF_p$ is at most $\alpha m$ in the rest of this paper.
\end{remark}

\subsection{Walsh functions}
Finally, in this section, we recall the definition of Walsh functions which play a central role in the quadrature error analysis of QMC rules using (higher order) digital nets and sequences.

\begin{definition}[Walsh functions]
For a prime $p$, we write $\omega_p=\exp(2\pi \sqrt{-1}/p)$. For $k\in \NN_0$ whose $p$-adic expansion is given by $k=\kappa_0+\kappa_1p+\cdots$, where all but a finite number of $\kappa_i$'s are 0, the $k$-th Walsh function $\wal_k\colon [0,1)\to \{1, \omega_p,\ldots, \omega_p^{p-1} \}$ is defined by
\begin{equation*}
\wal_k(x) := \omega_p^{\kappa_0\xi_1+\kappa_1\xi_2+\cdots},
\end{equation*}
where we denote the $p$-adic expansion of $x\in [0,1)$ by $x=\xi_1/p+\xi_2/p^2+\cdots$, which is understood to be unique in the sense that infinitely many of the $\xi_i$'s are different from $p-1$.

In the multivariate case, for $\bsk=(k_1,\ldots,k_s)\in \NN_0^s$ and $\bsx=(x_1,\ldots,x_s)\in [0,1)^s$, the $\bsk$-th Walsh function is defined by
\begin{equation*}
\wal_{\bsk}(\bsx) :=\prod_{j=1}^{s}\wal_{k_j}(x_j).
\end{equation*}
\end{definition}

\begin{lemma}\label{lem:walsh_grid}
For $k\in \NN_0$ and $n\in \NN$ we have
\begin{equation*}
\frac{1}{p^n}\sum_{h=0}^{p^n-1}\wal_k\left(\frac{h}{p^n}\right)=\begin{cases}
1 & \text{if $p^n$ divides $k$,} \\
0 & \text{otherwise.}
\end{cases}
\end{equation*}
\end{lemma}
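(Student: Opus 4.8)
The plan is to reduce this character sum to a product of elementary geometric sums over $\FF_p$, exploiting the explicit $p$-adic digit representation of the evaluation points $h/p^n$. The underlying mechanism is just the orthogonality of the additive characters of $\FF_p$, so the work is really in the bookkeeping of digits.

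First I would expand $h$ in base $p$ as $h = \eta_0 + \eta_1 p + \cdots + \eta_{n-1}p^{n-1}$, so that letting $h$ range over $\{0,\ldots,p^n-1\}$ is the same as letting $(\eta_0,\ldots,\eta_{n-1})$ range independently over $\{0,\ldots,p-1\}^n$. Dividing by $p^n$ reverses the digits: the unique finite $p$-adic expansion of $h/p^n$ has digits $\xi_i = \eta_{n-i}$ for $1\le i\le n$ and $\xi_i = 0$ for $i>n$. Writing $k = \kappa_0 + \kappa_1 p + \cdots$, the definition of $\wal_k$ then gives
\[
\wal_k\!\left(\frac{h}{p^n}\right) = \omega_p^{\sum_{i\ge 1}\kappa_{i-1}\xi_i} = \omega_p^{\sum_{j=0}^{n-1}\kappa_j\eta_{n-1-j}},
\]
where the sum is finite because only $\xi_1,\ldots,\xi_n$ can be nonzero.

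Next I would sum over $h$ and note that the exponent splits into independent contributions, one for each digit $\eta_{n-1-j}$, so the whole sum factorizes:
\[
\sum_{h=0}^{p^n-1}\wal_k\!\left(\frac{h}{p^n}\right) = \prod_{j=0}^{n-1}\left(\sum_{\eta=0}^{p-1}\omega_p^{\kappa_j\eta}\right).
\]
Each inner factor is a geometric sum with ratio $\omega_p^{\kappa_j}$. Since $\omega_p$ is a primitive $p$-th root of unity and $\kappa_j\in\{0,\ldots,p-1\}$, the factor equals $p$ when $\kappa_j=0$ and vanishes when $\kappa_j\neq 0$, because in the latter case $\omega_p^{\kappa_j}$ is again a primitive $p$-th root of unity and the $p$ summands run over all $p$-th roots of unity, whose sum is zero.

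Finally I would observe that the product equals $p^n$ exactly when $\kappa_0=\cdots=\kappa_{n-1}=0$, that is, when the $n$ lowest base-$p$ digits of $k$ all vanish, which is precisely the divisibility condition $p^n\mid k$; otherwise at least one factor is zero and the product vanishes. Dividing by $p^n$ yields the claimed dichotomy. The only step needing care is the digit-reversal identification in the first part and the reindexing that renders the factorization transparent; once that is in place, everything reduces to the standard character-sum evaluation and I expect no genuine obstacle.
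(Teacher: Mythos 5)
Your proof is correct. Where the paper handles this lemma by first reducing modulo $p^n$ (writing $k=k'+p^n l$ and noting $\wal_k(h/p^n)=\wal_{k'}(h/p^n)$), disposing of $k'=0$ trivially, and then citing Lemma~A.8 of Dick and Pillichshammer for $1\leq k'<p^n$, you give a self-contained computation: the digit-reversal identification $\xi_i=\eta_{n-i}$, the factorization of the character sum over the independent digits, and the evaluation of each geometric factor as $p$ or $0$ according to whether $\kappa_j=0$. The underlying mechanism (orthogonality of the additive characters of $\FF_p$) is surely what the cited lemma rests on as well, so the mathematics is the same; what your version buys is that it treats all $k\in\NN_0$ uniformly without the preliminary reduction (only the digits $\kappa_0,\ldots,\kappa_{n-1}$ ever enter), and it makes the divisibility criterion $p^n\mid k$ appear directly as the vanishing of the $n$ lowest base-$p$ digits rather than through the decomposition $k=k'+p^n l$. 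The paper's route is shorter on the page only because it outsources the core character-sum evaluation. One small point worth making explicit in your write-up: the case $\kappa_j\neq 0$ uses that $p$ is prime, so that $\omega_p^{\kappa_j}$ is again a primitive $p$-th root of unity --- you do say this, and it is exactly where primality of the base is needed.
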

\begin{proof}
Write $k=k'+p^n l$ for $0\leq k'<p^n$ and $l\geq 0$. From the definition of Walsh functions, we see that $\wal_k(h/p^n)=\wal_{k'}(h/p^n)$ for any $0\leq h<p^n$. Thus it suffices to prove the result for the case $0\leq k<p^n$. Actually, the result for $k=0$ is trivial and the proof for $1\leq k<p^n$ can be found in \citet[Lemma~A.8]{DPbook}.
\end{proof}

\begin{lemma}\label{lem:walsh_dual}
For a prime $p$ and $m,n\in \NN$, let $P_{m,n}=\{\bsx_h : 0\leq h<p^m\}$ be a digital net over $\FF_p$.  For $\bsk\in \NN_0^s$ we have
\begin{equation*}
\frac{1}{p^m}\sum_{h=0}^{p^m-1}\wal_{\bsk}(\bsx_h)=\begin{cases}
1 & \text{if $\bsk\in P_{m,n}^{\perp}$,} \\
0 & \text{otherwise.}
\end{cases}
\end{equation*}
\end{lemma}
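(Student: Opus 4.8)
The plan is to reduce the left-hand side to a character sum over $\FF_p^m$ and to evaluate it via the orthogonality of the additive characters of $\FF_p$. First I would fix $\bsk=(k_1,\ldots,k_s)$ and, for each $0\le h<p^m$, write the $p$-adic digit vector of $h$ as $\boldsymbol{\eta}=(\eta_0,\ldots,\eta_{m-1})^\top\in \FF_p^m$, so that $h\mapsto \boldsymbol{\eta}$ is a bijection between $\{0,\ldots,p^m-1\}$ and $\FF_p^m$. For the $j$-th component I would read the defining relation of a digital net as a column identity, $(\xi_{h,j,1},\ldots,\xi_{h,j,n})^\top = C_j\boldsymbol{\eta}\in \FF_p^n$. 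The crucial observation is that $x_{h,j}$ has precision at most $n$, so all its digits beyond the $n$-th vanish; hence in the exponent defining $\wal_{k_j}(x_{h,j})$ only the first $n$ digits of $k_j$, namely $\nu_n(k_j)$, survive, giving
\[
\wal_{k_j}(x_{h,j}) = \omega_p^{\nu_n(k_j)^\top C_j\boldsymbol{\eta}},
\]
where the exponent is understood modulo $p$.

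Next I would multiply these factors over $j=1,\ldots,s$ to obtain $\wal_{\bsk}(\bsx_h)=\omega_p^{\boldsymbol{\ell}^\top\boldsymbol{\eta}}$, with $\boldsymbol{\ell}:=C_1^\top\nu_n(k_1)\oplus\cdots\oplus C_s^\top\nu_n(k_s)\in \FF_p^m$, which is precisely the vector appearing in Definition~\ref{def:dual_net}. Summing over $h$, equivalently over $\boldsymbol{\eta}\in\FF_p^m$, the sum factorizes coordinatewise:
\[
\frac{1}{p^m}\sum_{h=0}^{p^m-1}\wal_{\bsk}(\bsx_h) = \frac{1}{p^m}\prod_{i=0}^{m-1}\sum_{\eta_i=0}^{p-1}\omega_p^{\ell_i\eta_i},
\]
where $\boldsymbol{\ell}=(\ell_0,\ldots,\ell_{m-1})^\top$.

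Finally I would evaluate each inner sum. Each is a geometric sum of $p$-th roots of unity, equal to $p$ when $\ell_i=0$ in $\FF_p$ and to $0$ otherwise; alternatively one may recognize it as $p$ times the one-dimensional average in Lemma~\ref{lem:walsh_grid} with $n=1$ and $k=\ell_i$. Hence the product equals $p^m$ exactly when every $\ell_i$ vanishes, i.e.\ when $\boldsymbol{\ell}=\bszero$, and equals $0$ otherwise; dividing by $p^m$ yields $1$ or $0$ accordingly. Since $\boldsymbol{\ell}=\bszero$ is by definition the membership condition $\bsk\in P_{m,n}^\perp$, the claim follows. I expect the only delicate step to be the bookkeeping in the first paragraph: one must argue carefully that the precision-$n$ truncation of $x_{h,j}$ is exactly what collapses the a priori infinite Walsh exponent to the finite pairing $\nu_n(k_j)^\top C_j\boldsymbol{\eta}$, and that all arithmetic may be carried out in $\FF_p$ because $\omega_p^p=1$; everything after that is the standard orthogonality computation.
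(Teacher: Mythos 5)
Your proof is correct: the reduction of $\wal_{\bsk}(\bsx_h)$ to $\omega_p^{\boldsymbol{\ell}^\top\boldsymbol{\eta}}$ with $\boldsymbol{\ell}=C_1^{\top}\nu_n(k_1)\oplus\cdots\oplus C_s^{\top}\nu_n(k_s)$, followed by the coordinatewise orthogonality of additive characters of $\FF_p$, is exactly the standard argument, and your care about the finite precision $n$ collapsing the Walsh exponent to the pairing with $\nu_n(k_j)$ is the right delicate point. The paper itself does not spell this out but simply cites Lemma~4.75 of Dick and Pillichshammer's book, where essentially this same character-sum computation is carried out, so your proposal matches the intended proof.
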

\begin{proof}
See \citet[Lemma~4.75]{DPbook} for the proof.
\end{proof}

As shown in \citet[Theorem~A.11]{DPbook}, the system $\{\wal_{\bsk} : \bsk\in \NN_0^s\}$ is a complete orthonormal system in $L_2([0,1)^s)$. Therefore, we can define the Walsh series of $f\in L_2([0,1)^s)$
\begin{equation*}
\sum_{\bsk\in \NN_0^s}\hat{f}(\bsk)\wal_{\bsk}(\bsx),
\end{equation*}
where $\hat{f}(\bsk)$ is the $\bsk$-th Walsh coefficient of $f$:
\begin{equation*}
\hat{f}(\bsk):=\int_{[0,1)^s}f(\bsx)\overline{\wal_{\bsk}(\bsx)}\rd \bsx.
\end{equation*}
It is easy to see that $I(f)=\hat{f}(\bszero)$.

For smooth functions $f\in W_{s,\alpha,q,r}$, the above Walsh series converges to $f$ point-wise absolutely, and moreover, the following bounds on the Walsh coefficients are known.
\begin{lemma}\label{lem:walsh_decay}
Let $u$ be a subset of $\{1,\ldots,s\}$ and $\bsk_u\in \NN^{|u|}$. The $(\bsk_u,\bszero)$-th Walsh coefficient of $f\in W_{s,\alpha,q,r}$ is bounded by
\begin{equation*}
|\hat{f}(\bsk_u,\bszero)| \leq \gamma_u \| f_u\|_{s,\alpha,q,r} C_{\alpha}^{|u|}p^{-\mu_{\alpha}(\bsk_u)}, 
\end{equation*}
where $f_u$ is given as in Lemma~\ref{lem:func_decomp} and
\begin{align*}
C_{\alpha}  = \left( 1+\frac{1}{p}+\frac{1}{p(p+1)}\right)^{\alpha-2} \left( 3+\frac{2}{p}+\frac{2p+1}{p-1}\right)\max\left( \frac{2}{(2\sin (\pi/p))^{\alpha}}, \max_{1\leq z<\alpha} \frac{1}{(2\sin(\pi/p))^z}\right).
\end{align*}
\end{lemma}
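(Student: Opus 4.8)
The plan is to reduce the multivariate Walsh coefficient to a single term of the decomposition in Lemma~\ref{lem:func_decomp} and then to factorize it into one-dimensional integrals, which are controlled by classical decay estimates for Walsh coefficients of Bernoulli polynomials. First I would write $f=\sum_{w}f_w$ and expand $\hat f(\bsk_u,\bszero)=\sum_w \int_{[0,1)^s} f_w(\bsx_w)\overline{\wal_{\bsk_u}(\bsx_u)}\rd\bsx$, using that $\wal_{(\bsk_u,\bszero)}(\bsx)=\wal_{\bsk_u}(\bsx_u)$. The key observation is that each summand of $f_w$ contains, in every variable $x_j$ with $j\in w$, either a factor $b_{\tau_j}(x_j)$ with $\tau_j\ge 1$ (when $j\in w\setminus v$) or a factor $\tilde b_{\alpha}(y_j-x_j)$ (when $j\in v$); since Bernoulli polynomials of degree $\ge 1$ have vanishing mean, this yields the property $\int_0^1 f_w(\bsx_w)\rd x_j=0$ for every $j\in w$. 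Because $k_j\ge 1$ for all $j\in u$, integrating the nontrivial factor $\overline{\wal_{k_j}}$ against a function independent of $x_j$ forces $u\subseteq w$, while the vanishing property forces $w\subseteq u$. Hence only $w=u$ survives, giving $\hat f(\bsk_u,\bszero)=\int_{[0,1)^{|u|}}f_u(\bsx_u)\overline{\wal_{\bsk_u}(\bsx_u)}\rd\bsx_u$.

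Next I would substitute the explicit formula for $f_u$, interchange the (absolutely convergent) integrals, and separate the variables $x_j$, $j\in u$. This writes $\hat f(\bsk_u,\bszero)$ as a sum over $v\subseteq u$ and $\bstau_{u\setminus v}\in\{1,\ldots,\alpha\}^{|u\setminus v|}$ of the integral of the mixed derivative $f^{(\bstau_{u\setminus v},\bsalpha_v,\bszero)}$ against a product of one-dimensional kernels: a factor $\int_0^1 b_{\tau_j}(x)\overline{\wal_{k_j}(x)}\rd x$ for each $j\in u\setminus v$, and a factor $\int_0^1 \tilde b_{\alpha}(y_j-x)\overline{\wal_{k_j}(x)}\rd x$ for each $j\in v$. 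The whole estimate thus factorizes over the coordinates $j\in u$.

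The main work is then the one-dimensional estimates. I would invoke the decay bounds for the Walsh coefficients of $b_\tau$ and of the periodized $\tilde b_{\alpha}$: exploiting the self-similar structure of $\wal_k$ on the $p$-adic subintervals and summing the resulting geometric series block by block over the digit positions, one shows that each such one-dimensional integral is bounded by a constant times $p^{-\mu_{\alpha}(k_j)}$, where the constants are precisely the factors appearing in $C_{\alpha}$ — the term $(2\sin(\pi/p))^{-z}$ arising from bounding the elementary sums $|1-\omega_p^{a}|^{-1}$, and the remaining rational factors from summing over the admissible positions. Multiplying these bounds over $j\in u$ produces $C_{\alpha}^{|u|}$ together with the exponent $\sum_{j\in u}\mu_{\alpha}(k_j)=\mu_{\alpha}(\bsk_u)$.

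Finally I would bound the remaining integral of $f^{(\bstau_{u\setminus v},\bsalpha_v,\bszero)}$, together with the sum over $v$ and $\bstau_{u\setminus v}$, by $\gamma_u\|f_u\|_{s,\alpha,q,r}$, applying H\"older's inequality in the $q$- and $r$-indices so as to match the definition of the norm, recognizing $\int_{[0,1)^{s-|v|}}f^{(\cdots)}\rd\bsx_{-v}$ inside the $L_q$–$L_r$ structure. I expect the main obstacle to be the one-dimensional estimates of the previous paragraph: deriving the sharp Walsh-coefficient decay with exactly the constants that assemble into $C_{\alpha}$ is the delicate, computational heart of the argument, whereas the reduction to $f_u$ and the coordinatewise factorization are routine once the vanishing property is observed.
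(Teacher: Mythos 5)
Your outline is correct and follows essentially the same route as the proof the paper relies on: the paper gives no argument of its own but cites Dick (2009, Theorem~15) and Dick et al.\ (2014, Theorem~3.5), and those proofs proceed exactly as you describe --- reduction to the single term $f_u$ via the mean-zero property of the decomposition, coordinatewise factorization into one-dimensional Walsh integrals of $b_{\tau}$ and the periodized $\tilde b_{\alpha}$, the sharp one-dimensional decay $p^{-\mu_{\alpha}(k_j)}$ (using that $\hat b_{\tau}(k)$ vanishes when $k$ has more than $\tau$ nonzero digits, so the exponent is $\mu_{\alpha}(k)$ rather than the weaker $\mu_{\tau}(k)$), and H\"older to produce $\gamma_u\|f_u\|_{s,\alpha,q,r}$. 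You correctly identify the one-dimensional estimates with the exact constants assembling into $C_{\alpha}$ as the computational core, which is precisely the content deferred to the cited references.
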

\begin{proof}
See \citet[Theorem~15]{D09} and \citet[Theorem~3.5]{DKLNS14} for the proof.
\end{proof}

\section{Extrapolation of truncated higher order digital nets and sequences}\label{sec:main}

\subsection{Euler-Maclaurin formula}
Before providing our extrapolation-based quadrature rules, here we show some necessary results as preparation. In what follows, for $l\in \NN$ and $\bsk=(k_1,\ldots,k_s)\in \NN_0^s$, we write $l\mid \bsk$ if $l$ divides $k_j$ for all $1\leq j\leq s$, and $l\nmid \bsk$ if there exists at least one component $k_j$ which is not divided by $l$. Further we write $\bsk<l$ (resp. $\bsk>l$) if $k_j<l$ (resp. $k_j>l$) holds for all $1\leq j\leq s$.

\begin{lemma}\label{lem:dual_digital_net}
For a prime $p$ and $m,n\in \NN$, let $P_{m,n}$ be a digital net over $\FF_p$ with generating matrices $C_1,\ldots,C_s\in \FF_p^{n\times m}$. Then we have
\begin{align*}
\left\{\bsk\in \NN_0^s : p^n\mid \bsk\right\} \subseteq P_{m,n}^{\perp} ,
\end{align*}
and
\begin{align*}
P_{m,n}^{\perp}\setminus \left\{\bsk\in \NN_0^s : p^n\mid \bsk\right\} = \left\{ \bsk+p^n\bsl : \bsk\in P_{m,n}^{\perp}, \bszero\neq \bsk<p^n, \bsl\in \NN_0^s \right\} .
\end{align*}
\end{lemma}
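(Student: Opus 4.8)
The plan is to work directly from the definition of the dual net. Recall that $\bsk \in P_{m,n}^{\perp}$ if and only if $C_1^{\top}\nu_n(k_1)\oplus \cdots \oplus C_s^{\top}\nu_n(k_s)=\bszero \in \FF_p^m$, where $\nu_n(k_j)=(\kappa_0,\ldots,\kappa_{n-1})^{\top}$ collects only the first $n$ digits of the $p$-adic expansion of $k_j$. The crucial observation I would exploit throughout is that $\nu_n$ depends only on these first $n$ digits, so that $\nu_n(k_j)=\nu_n(k_j \bmod p^n)$ for every $j$. Writing $k_j = k_j' + p^n l_j$ with $0\le k_j' < p^n$ and $l_j\in \NN_0$ thus gives $\nu_n(k_j)=\nu_n(k_j')$, which immediately shows that membership in $P_{m,n}^{\perp}$ is insensitive to the high-order part $p^n \bsl$.

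For the first inclusion, I would take $\bsk$ with $p^n\mid \bsk$, i.e.\ $k_j = p^n l_j$ for each $j$. Then $k_j' = 0$, so $\nu_n(k_j)=\nu_n(0)=\bszero$ for all $j$, and hence $\bigoplus_j C_j^{\top}\nu_n(k_j)=\bszero$, placing $\bsk$ in $P_{m,n}^{\perp}$. This establishes $\{\bsk\in \NN_0^s : p^n\mid \bsk\}\subseteq P_{m,n}^{\perp}$.

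For the second identity I would prove both inclusions. Take any $\bsk \in P_{m,n}^{\perp}$ that is not in $\{\bsk: p^n\mid \bsk\}$, and decompose it as $\bsk = \bsk'' + p^n \bsl$ where $k_j'' = k_j \bmod p^n$ satisfies $\bszero\le \bsk'' < p^n$ and $\bsl\in \NN_0^s$. By the observation above, $\nu_n(k_j)=\nu_n(k_j'')$, so $\bsk''$ also satisfies the defining linear system and lies in $P_{m,n}^{\perp}$. Moreover $\bsk''\ne \bszero$: if $\bsk''=\bszero$ then $p^n\mid \bsk$, contradicting our choice of $\bsk$. This exhibits $\bsk = \bsk'' + p^n\bsl$ in the required form, proving the inclusion $\subseteq$. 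Conversely, given any $\bsk'$ with $\bszero\ne \bsk' < p^n$ and $\bsk'\in P_{m,n}^{\perp}$, together with $\bsl\in \NN_0^s$, the same digit argument yields $\nu_n(k_j'+p^n l_j)=\nu_n(k_j')$, so $\bsk'+p^n\bsl\in P_{m,n}^{\perp}$; and since $\bsk'\ne \bszero$ with $\bsk' < p^n$, the vector $\bsk'+p^n\bsl$ cannot satisfy $p^n\mid(\bsk'+p^n\bsl)$ (that would force $\bsk'=\bszero$), so it lies in the set on the left. This gives $\supseteq$ and completes the argument.

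The reasoning here is essentially routine once the key property $\nu_n(k)=\nu_n(k\bmod p^n)$ is isolated, so I do not anticipate a genuine obstacle; the only point requiring mild care is the bookkeeping that the decomposition $k_j = k_j' + p^n l_j$ is unique with $0\le k_j' < p^n$, and that $\bsk''\ne \bszero$ is equivalent to $p^n\nmid \bsk$, so that the two pieces of the stated partition are genuinely disjoint and exhaustive.
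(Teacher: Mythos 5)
Your proof is correct and follows essentially the same route as the paper: both rest on the observation that $\nu_n(k)$ depends only on $k \bmod p^n$, so membership in $P_{m,n}^{\perp}$ is unaffected by adding $p^n\bsl$. The only cosmetic difference is that you verify the two inclusions of the second identity separately, whereas the paper writes $P_{m,n}^{\perp}$ as a single set equality and then splits off the $\bsk=\bszero$ part, noting disjointness.
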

\begin{proof}
The first statement is trivial, since for $\bsk\in \NN_0^s$ such that $p^n\mid \bsk$, we have $\nu_n(k_1)=\cdots=\nu_n(k_s)=(0,\ldots,0)^{\top}$ which gives
\begin{align*}
C_1^{\top}\nu_n(k_1)\oplus \cdots \oplus C_s^{\top}\nu_n(k_s)=\bszero. 
\end{align*}
Hence $P_{m,n}^{\perp}$ always contains such $\bsk$ as elements.

Let us move on to the proof of the second statement. For $k,l\in \NN_0$ with $k<p^n$, we have $\nu_n(k+p^n l)=\nu_n(k)$. This means that, for $\bsk,\bsl\in \NN_0^s$ with $\bsk<p^n$, whether $P_{m,n}^{\perp}$ contains $\bsk+p^n \bsl$ as an element does not depend on $\bsl$, so that $\bsk+p^n \bsl\in P_{m,n}^{\perp}$ if and only if $\bsk\in P_{m,n}^{\perp}$. Therefore we have
\begin{align*}
P_{m,n}^{\perp} & = \{\bsk+p^n\bsl : \bsk\in P_{m,n}^{\perp}, \bsk<p^n, \bsl\in \NN_0^s\} \\
& = \{\bsk+p^n\bsl : \bsk\in P_{m,n}^{\perp}, \bszero\neq \bsk<p^n, \bsl\in \NN_0^s\} \cup \{p^n\bsl : \bsl\in \NN_0^s \} ,
\end{align*}
where the last equality follows by separating the cases $\bsk\neq \bszero$ and $\bsk=\bszero$. Since the two sets on the right-most side above are disjoint, the result follows.
\end{proof}

\begin{corollary}\label{cor:euler-mac}
For a prime $p$ and $m,n\in \NN$, let $P_{m,n}=\{\bsx_h : 0\leq h<p^m\}$ be a digital net over $\FF_p$ with generating matrices $C_1,\ldots,C_s\in \FF_p^{n\times m}$. For $f\in W_{s,\alpha,q,r}$ we have
\begin{equation}\label{eq:euler-mac}
 \frac{1}{p^m}\sum_{h=0}^{p^m-1}f(\bsx_h) = I(f)+\sum_{\bsl\in \NN_0^s}\sum_{\substack{\bsk\in P_{m,n}^{\perp}\\ \bszero\neq \bsk<p^n}}\hat{f}(\bsk+p^n\bsl)+\sum_{\tau=1}^{\alpha-1}\frac{c_\tau(f)}{p^{\tau n}}+R_{s,\alpha,n} ,
\end{equation}
where $c_{\tau}(f)$ depends only on $f$ and $\tau$, and $R_{s,\alpha,n}\in O(p^{-\alpha n})$.
\end{corollary}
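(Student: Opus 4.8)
The plan is to expand $f$ into its Walsh series, use the character-sum identities to collapse the net average onto the dual net, peel off the frequencies divisible by $p^n$, and recognize the latter as a product rectangle rule to which a multivariate Euler--Maclaurin expansion applies. First I would write $f=\sum_{\bsk\in\NN_0^s}\hat f(\bsk)\wal_{\bsk}$; since $f\in W_{s,\alpha,q,r}$ its Walsh series converges pointwise absolutely (as recorded just before Lemma~\ref{lem:walsh_decay}), so I may interchange this sum with the finite average over the net and invoke Lemma~\ref{lem:walsh_dual}:
\begin{equation*}
\frac{1}{p^m}\sum_{h=0}^{p^m-1}f(\bsx_h)=\sum_{\bsk\in\NN_0^s}\hat f(\bsk)\,\frac{1}{p^m}\sum_{h=0}^{p^m-1}\wal_{\bsk}(\bsx_h)=\sum_{\bsk\in P_{m,n}^{\perp}}\hat f(\bsk).
\end{equation*}
Applying Lemma~\ref{lem:dual_digital_net} to split $P_{m,n}^{\perp}$ into the frequencies with $p^n\mid\bsk$ and its complement, the complement reproduces verbatim the double sum $\sum_{\bsl\in\NN_0^s}\sum_{\bsk\in P_{m,n}^{\perp},\,\bszero\neq\bsk<p^n}\hat f(\bsk+p^n\bsl)$ of the claim, leaving $\sum_{p^n\mid\bsk}\hat f(\bsk)$ to be analysed.

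Next I would identify this remaining sum with a grid rule. By Lemma~\ref{lem:walsh_grid} applied in each coordinate, $\tfrac1{p^n}\sum_{h=0}^{p^n-1}\wal_k(h/p^n)$ is the indicator of $p^n\mid k$, so, again exchanging summations by absolute convergence,
\begin{equation*}
\sum_{\substack{\bsk\in\NN_0^s\\ p^n\mid\bsk}}\hat f(\bsk)=\frac{1}{p^{ns}}\sum_{\bsh\in\{0,\ldots,p^n-1\}^s}f\!\left(\frac{\bsh}{p^n}\right),
\end{equation*}
the $s$-dimensional product rectangle rule of mesh $p^{-n}$. The cleanest engine for expanding its error is the pointwise representation of Lemma~\ref{lem:func_decomp} combined with the classical multiplication (Raabe) formula $\tfrac1{N}\sum_{h=0}^{N-1}\tilde b_\tau(x+h/N)=N^{-\tau}\tilde b_\tau(Nx)$ for periodic Bernoulli polynomials.

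Concretely, I would insert $f=\sum_u f_u$ and average each $f_u$ over the grid. For a coordinate $j\in u\setminus v$ the factor $b_{\tau_j}(x_j)$ averages, by Raabe at $x=0$, to $p^{-\tau_j n}b_{\tau_j}$; for a coordinate $j\in v$ the factor $\tilde b_{\alpha}(y_j-x_j)$ averages (inside the $\bsy$-integral, by Fubini on the finite grid) to $p^{-\alpha n}\tilde b_{\alpha}(p^n y_j)$. Hence the term of $f_u$ indexed by $(v,\bstau_{u\setminus v})$ acquires the total power $p^{-(\alpha|v|+\sum_{j\in u\setminus v}\tau_j)n}$ with an $n$-independent prefactor. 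Every term with $v\neq\emptyset$ carries a power at least $\alpha$ and a bounded $\tilde b_{\alpha}$ factor, so these, together with the $v=\emptyset$ terms whose power $\sum_{j\in u}\tau_j\geq\alpha$, collect into $R_{s,\alpha,n}=O(p^{-\alpha n})$, finiteness of the prefactors being guaranteed by $\|f_u\|_{s,\alpha,q,r}<\infty$. The $u=\emptyset$ term gives $I(f)$, and grouping the surviving $v=\emptyset$ terms by their total power $\tau=\sum_{j\in u}\tau_j\in\{1,\ldots,\alpha-1\}$ defines
\begin{equation*}
c_\tau(f)=\sum_{\emptyset\neq u\subseteq\{1,\ldots,s\}}\ \sum_{\substack{\bstau_u\in\{1,\ldots,\alpha\}^{|u|}\\ \sum_{j\in u}\tau_j=\tau}}\Big(\prod_{j\in u}b_{\tau_j}\Big)\int_{[0,1)^s}f^{(\bstau_u,\bszero)}(\bsy)\rd\bsy,
\end{equation*}
which manifestly depends only on $f$ and $\tau$.

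The main obstacle is exactly this Euler--Maclaurin bookkeeping: one must verify that, after the multiplication formula is applied in every coordinate, the contributions of all subsets $u$, all splittings $v\subseteq u$, and all derivative multi-indices $\bstau_{u\setminus v}$ regroup so that the coefficient of each surviving power $p^{-\tau n}$ with $\tau<\alpha$ is genuinely free of $n$, while every residual term is uniformly $O(p^{-\alpha n})$. All of these are finite sums (over $u\subseteq\{1,\ldots,s\}$ and $\bstau\in\{1,\ldots,\alpha\}^{|u|}$), so no convergence issue arises in this step; the only analytic inputs are the pointwise absolute convergence of the Walsh series justifying the two interchanges of summation, and the boundedness of $\tilde b_{\alpha}$ together with the finiteness of $\|f_u\|_{s,\alpha,q,r}$ bounding the remainder. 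A possible alternative is to bound $|\hat f(p^n\bsk)|$ directly through Lemma~\ref{lem:walsh_decay} using $\mu_{\alpha}(p^n\bsk)=\mu_{\alpha}(\bsk)+n\sum_{j:k_j\neq 0}\min(\alpha,v_j)$, where $v_j$ is the number of nonzero $p$-adic digits of $k_j$, and to group frequencies by $\sum_j\min(\alpha,v_j)$; this yields the correct decay orders immediately but still forces one back to the Bernoulli identities to pin down the exact $n$-independent coefficients, so I expect the decomposition-based computation to be the more economical route.
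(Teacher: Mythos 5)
Your argument is correct and follows the paper's proof step for step through its first two stages: the Walsh-series expansion combined with Lemmas~\ref{lem:walsh_dual} and \ref{lem:dual_digital_net} to peel off the double sum over $\bsk+p^n\bsl$, and then Lemma~\ref{lem:walsh_grid} to identify $\sum_{p^n\mid\bsk}\hat{f}(\bsk)$ with the signed error of the regular-grid rule $P_n^*$. The only divergence is in the final stage: the paper simply cites \citet[Theorem~3.4]{DGYxx} for the Euler--Maclaurin expansion \eqref{eq:expansion_proof_2}, whereas you rederive it from the pointwise representation of Lemma~\ref{lem:func_decomp} together with the Raabe multiplication formula for periodized Bernoulli polynomials. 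Your bookkeeping comes out right: the surviving $v=\emptyset$ terms reproduce exactly the paper's coefficient $c_\tau(f)=\sum_{|\bstau|_1=\tau}I(f^{(\bstau)})\prod_{\tau_j\neq 0}b_{\tau_j}$ (your indexing by the support $u$ of $\bstau$ is equivalent, since $\tau<\alpha$ forces each $\tau_j\leq\alpha-1$), and the $v\neq\emptyset$ terms land in the remainder. One small imprecision: for $v\neq\emptyset$ the prefactor $\int_{[0,1)^s} f^{(\bstau_{u\setminus v},\bsalpha_v,\bszero)}(\bsy)\prod_{j\in v}\tilde{b}_{\alpha}(p^n y_j)\rd\bsy$ is not $n$-independent as you first assert, only uniformly bounded in $n$; since you note the boundedness of $\tilde{b}_{\alpha}$ in the very next sentence and these terms are absorbed into $R_{s,\alpha,n}$ anyway, this does not affect the conclusion. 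The trade-off between the two routes is that yours is self-contained but reproves a known result, while the citation the paper uses also delivers the norm-explicit remainder bound \eqref{eq:remainder_bound} that is needed later in the proof of Theorem~\ref{thm:main}; if you keep your derivation, you should make the implied constant in $R_{s,\alpha,n}\in O(p^{-\alpha n})$ explicit in terms of $\|f\|_{s,\alpha,q,r}$ and $D_\alpha$, which your appeal to the finiteness of $\|f_u\|_{s,\alpha,q,r}$ readily yields.
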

\begin{proof}
Using the Walsh series of $f$, Lemma~\ref{lem:walsh_dual} and Lemma~\ref{lem:dual_digital_net}, we have
\begin{align}
\frac{1}{p^m}\sum_{h=0}^{p^m-1}f(\bsx_h) & = \frac{1}{p^m}\sum_{h=0}^{p^m-1}\sum_{\bsk\in \NN_0^s}\hat{f}(\bsk)\wal_{\bsk}(\bsx_h) = \sum_{\bsk\in \NN_0^s}\hat{f}(\bsk)\frac{1}{p^m}\sum_{h=0}^{p^m-1}\wal_{\bsk}(\bsx_h) \nonumber \\
& = \sum_{\bsk\in P_{m,n}^{\perp}}\hat{f}(\bsk) = I(f)+\sum_{\bsk\in P_{m,n}^{\perp}\setminus \{\bszero\}}\hat{f}(\bsk) \nonumber \\
& = I(f)+\sum_{\bsl\in \NN_0^s}\sum_{\substack{\bsk\in P_{m,n}^{\perp}\\ \bszero\neq \bsk<p^n}}\hat{f}(\bsk+p^n\bsl)+\sum_{\substack{\bsk\in \NN_0^s\setminus \{\bszero\}\\ p^n\mid \bsk}}\hat{f}(\bsk).\label{eq:expansion_proof_1}
\end{align}

Now we write
\begin{align*}
P_n^* :=\left\{ \left( \frac{h_1}{p^n},\ldots,\frac{h_s}{p^n}\right) : 0\leq h_1,\ldots,h_s<p^n\right\}, 
\end{align*}
which is called a \emph{regular grid}. Using Lemma~\ref{lem:walsh_grid}, for $\bsk\in \NN_0^s$ we have
\begin{equation*}
 \frac{1}{p^{ns}}\sum_{h_1,\ldots,h_s=0}^{p^n-1}\wal_{\bsk}\left( \frac{h_1}{p^n},\ldots,\frac{h_s}{p^n}\right) = \prod_{j=1}^{s}\frac{1}{p^n}\sum_{h_j=0}^{p^n-1}\wal_{k_j}\left( \frac{h_j}{p^n}\right) = \begin{cases}
1 & \text{if $p^n\mid \bsk$,} \\
0 & \text{otherwise.}
\end{cases}
\end{equation*}
Using this result, we obtain
\begin{align*}
\sum_{\substack{\bsk\in \NN_0^s\setminus \{\bszero\}\\ p^n\mid \bsk}}\hat{f}(\bsk) & = \sum_{\bsk\in \NN_0^s\setminus \{\bszero\}}\hat{f}(\bsk)\frac{1}{p^{ns}}\sum_{h_1,\ldots,h_s=0}^{p^n-1}\wal_{\bsk}\left( \frac{h_1}{p^n},\ldots,\frac{h_s}{p^n}\right) \\
& = \frac{1}{p^{ns}}\sum_{h_1,\ldots,h_s=0}^{p^n-1}\sum_{\bsk\in \NN_0^s\setminus \{\bszero\}}\hat{f}(\bsk)\wal_{\bsk}\left( \frac{h_1}{p^n},\ldots,\frac{h_s}{p^n}\right) \\
& = \frac{1}{p^{ns}}\sum_{h_1,\ldots,h_s=0}^{p^n-1}f\left( \frac{h_1}{p^n},\ldots,\frac{h_s}{p^n}\right) -I(f).
\end{align*}
This means that the last term of \eqref{eq:expansion_proof_1} is nothing but a signed integration error of a QMC rule using a regular grid $P_n^*$ as quadrature nodes. It is shown by \citet[Theorem~3.4]{DGYxx} that
\begin{equation}
\frac{1}{p^{ns}}\sum_{h_1,\ldots,h_s=0}^{p^n-1}f\left( \frac{h_1}{p^n},\ldots,\frac{h_s}{p^n}\right) -I(f) = \sum_{\tau=1}^{\alpha-1}\frac{c_\tau(f)}{p^{\tau n}}+R_{s,\alpha,n} ,\label{eq:expansion_proof_2}
\end{equation}
where 
\begin{equation*}
c_\tau(f) = \sum_{\substack{\bstau\in \{0,1,\ldots,\alpha-1\}^s\\ |\bstau|_1=\tau}}I(f^{(\bstau)}) \prod_{\substack{j=1\\ \tau_j\neq 0}}^{s}b_{\tau_j},
\end{equation*}
with $|\bstau|_1=|\tau_1|+\cdots+|\tau_s|$, and
\begin{align}\label{eq:remainder_bound}
 |R_{s,\alpha,n}| & \leq \frac{\|f\|_{s,\alpha,q,r}}{p^{\alpha n}}\left[ \sum_{\emptyset \neq u\subseteq \{1,\ldots,s\}}\left(\gamma_u(\alpha+1)^{|u|/q'}D_{\alpha}^{|u|}\right)^{r'}\right]^{1/r'}  \nonumber \\
 & \leq \frac{\|f\|_{s,\alpha,q,r}}{p^{\alpha n}}\sum_{\emptyset \neq u\subseteq \{1,\ldots,s\}}\gamma_u(\alpha+1)^{|u|}D_{\alpha}^{|u|},
\end{align}
with $q'$ an $r'$ being the H\"{o}lder conjugates of $q$ and $r$, respectively, and 
\begin{equation*}
D_{\alpha}=\max\left\{|b_1|,\ldots,|b_{\alpha-1}|, \sup_{x\in [0,1)}|\tilde{b}_{\alpha}(x)|\right\}. 
\end{equation*}
We complete the proof by substituting \eqref{eq:expansion_proof_2} into the last term of \eqref{eq:expansion_proof_1}.
\end{proof}

\subsection{An algorithm and its worst-case error bound}
Throughout this subsection, let $\Scal$ be an order $\alpha$ digital $(t,s)$-sequence over $\FF_p$ with generating matrices $C_1,\ldots,C_s$. For $m,n\in \NN$, we denote the upper-left $n\times m$ submatrices of $C_1,\ldots,C_s$ by $C_1^{[n\times m]},\ldots,C_s^{[n\times m]}$, and denote a digital net with generating matrices $C_1^{[n\times m]},\ldots,C_s^{[n\times m]}$ by $P^{[n\times m]}$. In view of Remarks~\ref{rem:finiteness} and \ref{rem:digit_interlacing}, we assume that
\begin{equation*}
P^{[\alpha m\times m]} = P^{[(\alpha m+1)\times m]} =\cdots = P^{[\NN\times m]},
\end{equation*}
where the right-most side denotes the first $p^m$ points of $\Scal$. It is easy to see that, for a finite $n$, we have
\begin{equation*}
P^{[n\times m]} = \tr_n(P^{[\NN\times m]}),
\end{equation*}
where the map $\tr_n$ is defined as in \eqref{eq:trunction}, and from Remark~\ref{rem:finiteness2}, we also have
\begin{equation}\label{eq:trunc_dual}
 \left\{ \bsk\in (P^{[n\times m]})^{\perp} : \bsk<p^n\right\} = \left\{ \bsk\in (P^{[\NN\times m]})^{\perp} : \bsk<p^n\right\}. 
\end{equation}
Furthermore, instead of $A_N(f)$, we write
\begin{equation*}
I(f;P) = \frac{1}{N}\sum_{\bsx\in P}f(\bsx)
\end{equation*}
for an $N$-element point set $P\subset [0,1)^s$ to emphasize which point set is used in numerical integration. 

Now let us consider the following algorithm:
\begin{algorithm}\label{alg:first}
Let $\Scal$ be an order $\alpha$ digital $(t,s)$-sequence over $\FF_p$. For $m\in \NN$ and $f\colon [0,1)^s\to \RR$, do the following:
\begin{enumerate}
\item For $0\leq i<\alpha$, compute
\begin{equation*}
I_{m+i}^{(1)}(f) := I\left(f;P^{[(m+i)\times (m+i)]}\right).
\end{equation*}
\item For $1\leq \tau<\alpha$, let
\begin{equation*}
I_{m+i}^{(\tau+1)}(f) := \frac{p^\tau I_{m+i+1}^{(\tau)}(f)-I_{m+i}^{(\tau)}(f)}{p^\tau-1}\quad \text{for $0\leq i<\alpha-\tau$}.
\end{equation*}
\item Return $I_{m}^{(\alpha)}(f)$ as an approximation of $I(f)$.
\end{enumerate}
\end{algorithm}

We emphasize that Algorithm~\ref{alg:first} uses only digital nets with \emph{square} generating matrices, which significantly reduces the necessary precision of points from $\alpha m$ (see Remarks~\ref{rem:precision} and \ref{rem:digit_interlacing}) to $m$. Since the resulting estimate $I_m^{(\alpha)}(f)$ is given by a weighted sum of QMC rules with different sizes of nodes, $I_{m}^{(1)}(f),\ldots,I_{m+\alpha-1}^{(1)}(f)$, this quadrature rule is a linear algorithm with the total number of function evaluations
\begin{equation*}
N=p^{m}+\cdots+p^{m+\alpha-1}.
\end{equation*}
As a main result of this paper, we show that our quadrature rule $I_m^{(\alpha)}(f)$ achieves the almost optimal rate of convergence of the worst-case error in $W_{s,\alpha,q,r}$.
\begin{theorem}\label{thm:main}
Let $\alpha\in \NN$, $\alpha\geq 2$, and $1\leq q,r \leq \infty$. When $\alpha m>t$ holds, the worst-case error of the algorithm $I_m^{(\alpha)}(f)$ in $W_{s,\alpha,q,r}$ is bounded above by
\begin{equation*}
\sup_{\substack{f\in W_{s,\alpha,q,r}\\ \|f\|_{s,\alpha,q,r}\leq 1}}\left| I(f)-I_m^{(\alpha)}(f)\right| \leq \sum_{\emptyset \neq u \subseteq \{1,\ldots,s\}}\gamma_u U_{|u|,\alpha,t} \frac{(\log_p N)^{\alpha |u|}}{N^{\alpha}}, 
\end{equation*}
where $N=p^{m}+\cdots+p^{m+\alpha-1}$ and $U_{|u|,\alpha,t} >0$ for all $\emptyset \neq u \subseteq \{1,\ldots,s\}$.
\end{theorem}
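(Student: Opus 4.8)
The plan is to couple the Euler--Maclaurin-type expansion of Corollary~\ref{cor:euler-mac} with the algebraic effect of Richardson extrapolation, and then to estimate the dual-net sums that survive the extrapolation by means of the order~$\alpha$ net property and the Walsh-coefficient decay of Lemma~\ref{lem:walsh_decay}. First I would apply Corollary~\ref{cor:euler-mac} to each level-one value. Writing $n_i=m+i$, the net $P^{[n_i\times n_i]}$ underlying $I_{m+i}^{(1)}(f)$ has square generating matrices, so \eqref{eq:euler-mac} reads
\[
I_{m+i}^{(1)}(f)-I(f)=E_i+\sum_{\tau=1}^{\alpha-1}\frac{c_\tau(f)}{p^{\tau n_i}}+R_{s,\alpha,n_i},
\]
where I abbreviate the folded dual-net sum by $E_i:=\sum_{\bsl\in\NN_0^s}\sum_{\bszero\neq\bsk<p^{n_i},\,\bsk\in(P^{[n_i\times n_i]})^{\perp}}\hat f(\bsk+p^{n_i}\bsl)$. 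The decisive structural point is that the polynomial terms $c_\tau(f)p^{-\tau n_i}$ depend on $i$ only through the geometric factor $p^{-\tau i}$, which is precisely what the recursion in Algorithm~\ref{alg:first} is designed to annihilate.

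Second, I would establish the annihilation by induction on the level $\tau$. Since the recursion $I^{(\tau+1)}_{m+i}=(p^\tau I^{(\tau)}_{m+i+1}-I^{(\tau)}_{m+i})/(p^\tau-1)$ is linear and fixes constants, each step preserves $I(f)$, kills the order-$\tau$ term (a one-line check using $p^\tau\cdot p^{-\tau(m+i+1)}=p^{-\tau(m+i)}$), and carries the higher-order polynomial terms, the dual sums, and the remainders to new linear combinations of the same type; in particular the transformed remainder stays $O(p^{-\alpha(m+i)})$ because $\tau<\alpha$. Unrolling to $\tau=\alpha$ produces fixed weights $w_0,\dots,w_{\alpha-1}$ depending only on $p$ and $\alpha$, with every polynomial term gone, so that $I_m^{(\alpha)}(f)-I(f)=\sum_{i=0}^{\alpha-1}w_iE_i+\sum_{i=0}^{\alpha-1}w_iR_{s,\alpha,m+i}$. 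The remainder part is then immediately controlled by \eqref{eq:remainder_bound}: it is $O(p^{-\alpha m})=O(N^{-\alpha})$ and already carries the weighted shape $\sum_u\gamma_u(\cdots)$, hence fits the claimed bound with no logarithmic factor to spare.

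Third --- and this is the heart of the matter --- I would bound each $E_i$. By \eqref{eq:trunc_dual} the admissible low parts $\bsk$ coincide with the full-precision dual net restricted to $\bsk<p^{n_i}$, so the order~$\alpha$ net property (Definitions~\ref{def:HO_digital_net} and~\ref{def:HO_digital_seq}, applicable since $\alpha m>t$) forces $\mu_{\alpha}(\bsk)>\alpha n_i-t$ for every nonzero such $\bsk$; as adjoining the higher digits of $p^{n_i}\bsl$ can only raise the weight, every argument $\boldsymbol{w}=\bsk+p^{n_i}\bsl$ occurring in $E_i$ also satisfies $\mu_{\alpha}(\boldsymbol{w})>\alpha n_i-t$. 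Applying Lemma~\ref{lem:walsh_decay} and grouping by the support $u=\mathrm{supp}(\boldsymbol{w})$, the fold over $\bsl$ costs, componentwise, only a bounded factor: a direct computation gives $\sum_{l\geq 0}p^{-\mu_{\alpha}(k_j+p^{n_i}l)}\leq C_{p,\alpha}\,p^{-\mu_{\alpha}(k_j)}$, the $\bsl=\bszero$ term dominating because any freshly created high digit costs at least $p^{-n_i}$ while $c_\alpha\leq n_i$, and each coordinate forced into the support by a nonzero $l_j$ is suppressed by a further $p^{-n_i}$. What is left is the standard higher-order digital-net Walsh sum $\sum_{\bszero\neq\bsk_u,\,\mu_{\alpha}(\bsk_u)>\alpha n_i-t}p^{-\mu_{\alpha}(\bsk_u)}$, which by the usual counting of vectors with a prescribed $\mu_{\alpha}$-value is at most $C_{|u|,\alpha}\,n_i^{\alpha|u|}\,p^{-\alpha n_i+t}$.

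Finally I would assemble the pieces: summing over $i=0,\dots,\alpha-1$ against the fixed weights, the $i=0$ term governs the geometric decay, and with $N\asymp p^m$ (so $m\asymp\log_p N$ and $p^{-\alpha m}\asymp N^{-\alpha}$) each support contributes $\gamma_u\,U_{|u|,\alpha,t}\,(\log_p N)^{\alpha|u|}/N^{\alpha}$ once $p^t$, the constants $C_{\alpha}^{|u|}$, the fold constants, and --- via $\|f_u\|_{s,\alpha,q,r}\leq 1$ from Lemma~\ref{lem:func_decomp} --- the norm factors are absorbed into $U_{|u|,\alpha,t}$. I expect the genuine obstacle to lie in the third step: disentangling the fold over $\bsl$ from the support-dependent constants $\gamma_u\|f_u\|_{s,\alpha,q,r}C_{\alpha}^{|u|}$, since a nonzero $l_j$ can enlarge the support and thereby change the governing weight, and pinning down the $\mu_{\alpha}$-counting so that the exponent of $\log_p N$ emerges as exactly $\alpha|u|$. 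The Richardson bookkeeping and the remainder estimate are routine by comparison.
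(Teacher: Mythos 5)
Your overall architecture coincides with the paper's: expand each $I_{m+i}^{(1)}(f)$ via Corollary~\ref{cor:euler-mac}, let the Richardson weights $w_i$ annihilate the terms $c_\tau(f)p^{-\tau(m+i)}$, fold the sum over $\bsl$ into the low parts $\bsk$ using the superadditivity $\mu_{\alpha}(\bsk+p^n\bsl)\geq \mu_{\alpha}(\bsk)+\mu_{\alpha}(\bsl)$ of Lemma~\ref{lem:mu_alpha} together with the finiteness of $\sum_{l\in\NN_0}p^{-\mu_{\alpha}(l)}$, and control the remainders by \eqref{eq:remainder_bound}. All of that is sound. The gap is in your third step, where you replace the surviving dual-net sum by $\sum_{\bszero\neq\bsk_u,\ \mu_{\alpha}(\bsk_u)>\alpha n_i-t}p^{-\mu_{\alpha}(\bsk_u)}$ --- that is, you retain only the minimum-weight consequence of the order-$\alpha$ net property and discard membership in $(P^{[n_i\times n_i]})^{\perp}_u$ --- and then claim the bound $C_{|u|,\alpha}\,n_i^{\alpha|u|}p^{-\alpha n_i+t}$ by counting vectors with a prescribed $\mu_{\alpha}$-value. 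That relaxed sum is far too large. Already for $|u|=1$ and $\alpha\geq 2$: any $k$ with at least $\alpha$ nonzero digits at positions $c_1>\cdots>c_\alpha$ has $\mu_{\alpha}(k)=c_1+\cdots+c_\alpha$ no matter what its roughly $p^{c_\alpha}$ admissible lower digits are, so the number of $k$ with $\mu_{\alpha}(k)=\mu$ grows like $p^{\mu/\alpha}$ up to polynomial factors, and $\sum_{k:\,\mu_{\alpha}(k)>M}p^{-\mu_{\alpha}(k)}\asymp p^{-M(1-1/\alpha)}$ rather than $M^{O(1)}p^{-M}$ (the restriction $k<p^{n_i}$ does not repair this). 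With $M=\alpha n_i-t$ your estimate therefore only yields a rate of order $N^{-(\alpha-1)}$, one full order short of the theorem.

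What rescues the argument --- and what the paper invokes as Lemma~\ref{lem:HO_net}, quoting Dick's Lemma~5.2 and Lemma~15.20 of Dick--Pillichshammer --- is the linear structure of the dual net: $(P^{[n_i\times n_i]})^{\perp}$ is the kernel of a linear map into $\FF_p^{n_i}$, so among the exponentially many integer vectors sharing a given leading-digit pattern only a fraction of about $p^{-n_i}$ can lie in the dual net, and it is precisely this extra factor that upgrades $p^{-(\alpha-1)n_i}$ to $(\alpha n_i-t+2)^{\alpha|u|}p^{-(\alpha n_i-t)}$. So you must keep the constraint $\bsk_u\in(P^{[n_i\times n_i]})^{\perp}_u$ in the surviving sum (which, via \eqref{eq:trunc_dual}, agrees with the full-precision dual-net sum restricted to $\bsk_u<p^{n_i}$ and is hence majorized by $\sum_{\bsk_u\in(P^{[\NN\times n_i]})^{\perp}_u}p^{-\mu_{\alpha}(\bsk_u)}$) and apply Lemma~\ref{lem:HO_net} to it directly. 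With that replacement, your fold over $\bsl$, the Richardson bookkeeping, and the final assembly go through essentially as in the paper.
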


In order to prove Theorem~\ref{thm:main}, we need some preparations. 
\begin{lemma}\label{lem:HO_net}
Let $P_{m, \alpha m}$ be an order $\alpha$ digital $(t,m,s)$-net over $\FF_p$ or be the first $p^m$ points of an order $\alpha$ digital $(t,s)$-sequence over $\FF_p$ such that $\alpha m>t$. For a non-empty subset $u\subseteq \{1,\ldots,s\}$, we write
\begin{equation*}
(P_{m, \alpha m})_u^{\perp} = \{\bsk_u\in \NN^{|u|} : (\bsk_u,\bszero) \in P_{m, \alpha m}^{\perp} \}. 
\end{equation*}
Then we have
\begin{equation*}
\sum_{\bsk_u\in (P_{m, \alpha m})_u^{\perp}}p^{-\mu_{\alpha}(\bsk_u)}\leq E_{|u|,\alpha}\frac{(\alpha m -t+2)^{\alpha |u|}}{p^{\alpha m-t}} ,
\end{equation*}
where
\begin{equation*}
E_{|u|,\alpha} = p^{\alpha |u|}\left( \frac{1}{p}+\left( \frac{p}{p-1}\right)^{\alpha|u|}\right).
\end{equation*}
\end{lemma}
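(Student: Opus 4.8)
The plan is to reduce the sum to one over dual-net vectors whose digits are confined to the first $\alpha m$ positions, and then to estimate that sum by counting codewords weight-by-weight using the linear-algebraic structure of the dual net, rather than its minimum-weight property alone. The only input from Definition~\ref{def:HO_digital_net} that I would use is that every nonzero $\bsk\in P_{m,\alpha m}^{\perp}$ satisfies $\mu_{\alpha}(\bsk)>\alpha m-t=:w$; since $u\neq\emptyset$ and $\bsk_u\in\NN^{|u|}$ has strictly positive entries, each $(\bsk_u,\bszero)$ is a nonzero dual vector, so $\mu_{\alpha}(\bsk_u)=\mu_{\alpha}(\bsk_u,\bszero)>w$. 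Throughout I write $d=|u|$.

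The tempting shortcut of relaxing to all $\bsk_u\in\NN^{d}$ with $\mu_{\alpha}(\bsk_u)>w$ is invalid: already for $d=1$, $\alpha=2$, $p=2$, the free low-order digits below the second significant position make that relaxed sum of order $p^{-w/2}$, far larger than the target order $(w+2)^{2}p^{-w}$. Hence the membership constraint $(\bsk_u,\bszero)\in P_{m,\alpha m}^{\perp}$ must genuinely be exploited. To isolate the bounded part I would write $k_j=a_j+p^{\alpha m}b_j$ with $0\le a_j<p^{\alpha m}$ and $b_j\in\NN_0$, and apply Lemma~\ref{lem:dual_digital_net} with $n=\alpha m$: membership depends only on the low part, so $(\bsk_u,\bszero)\in P_{m,\alpha m}^{\perp}$ holds precisely when $(\boldsymbol{a}_u,\bszero)\in P_{m,\alpha m}^{\perp}$, while $\boldsymbol{b}_u$ ranges freely. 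Since $\mu_{\alpha}(a_j+p^{\alpha m}b_j)\ge\mu_{\alpha}(a_j)$ always, and is at least $\alpha m+1$ once $b_j\ge1$, every nonzero $\boldsymbol{b}_u$ carries an extra factor $p^{-\alpha m}$; summing these gives a lower-order contribution (the source of the $1/p$ summand in $E_{d,\alpha}$), and the main term is $\sum p^{-\mu_{\alpha}(\boldsymbol{a}_u)}$ over $\boldsymbol{a}_u\in\NN^{d}$ with $\boldsymbol{a}_u<p^{\alpha m}$ and $(\boldsymbol{a}_u,\bszero)\in P_{m,\alpha m}^{\perp}$, still obeying $\mu_{\alpha}(\boldsymbol{a}_u)>w$.

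For this bounded sum I would group codewords $\boldsymbol{a}_u$ by their significant pattern: the at most $\alpha$ top nonzero positions in each coordinate, which fix $\mu_{\alpha}(\boldsymbol{a}_u)=z$, together with their digit values in $\{1,\ldots,p-1\}$. For a fixed pattern of weight $z>w$, the digits below the $\alpha$-th significant position (present only in coordinates carrying more than $\alpha$ nonzero digits) are constrained by the $m$ linear equations defining $P_{m,\alpha m}^{\perp}$; the order-$\alpha$ net property, read as linear independence of the matrix rows indexed by any position set of $\mu_{\alpha}$-weight at most $\alpha m-t$, is exactly what prevents these low-order digits from being free and keeps the number of admissible $\boldsymbol{a}_u$ of weight $z$ polynomially bounded of degree $\alpha d$ in the relevant range. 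Summing $p^{-z}$ against this count, the geometric decay across the $\le\alpha$ significant positions in each of the $d$ coordinates (with $p-1$ digit choices each) produces the factor $(p/(p-1))^{\alpha d}$, the choice of positions produces $(w+2)^{\alpha d}$, and the threshold $z>w$ produces $p^{-w}$; together with the $p^{\alpha d}$ normalisation these assemble into $E_{d,\alpha}$.

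The crux, and the only genuinely delicate step, is the weight-by-weight count of the previous paragraph, and specifically the coordinates carrying more than $\alpha$ nonzero digits: one must convert the linear-independence content of Definition~\ref{def:HO_digital_net} into a quantitative upper bound on how many low-order digit completions actually lie in the dual. Ignoring this (the naive relaxation) overshoots by the exponential factor $p^{(\alpha-1)w/\alpha}$ noted above, so the entire strength of the lemma rests on using the $m$ defining equations rather than merely the minimum-weight inequality.
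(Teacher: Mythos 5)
The paper does not actually prove this lemma; it quotes it from Dick (2008, Lemma~5.2) and Dick--Pillichshammer (2010, Lemma~15.20), so strictly speaking a citation is all that is expected here. Your blind attempt gets the architecture right: the reduction via Lemma~\ref{lem:dual_digital_net} with $n=\alpha m$ to a bounded part $\boldsymbol{a}_u<p^{\alpha m}$ plus a freely ranging high part $\boldsymbol{b}_u$ is sound, and your observation that relaxing the membership constraint to the mere minimum-weight inequality $\mu_{\alpha}(\bsk_u)>\alpha m-t$ overshoots by an exponential factor (your $d=1$, $\alpha=2$ computation is correct) is a genuine and important insight --- it explains why the lemma cannot be proved from Definition~\ref{def:HO_digital_net} read only as a statement about $\mu_{\alpha}(P_{m,\alpha m}^{\perp})$.

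However, the proposal has a genuine gap, and it sits exactly where you yourself locate ``the crux'': the weight-by-weight count of dual vectors $\boldsymbol{a}_u$ with $\mu_{\alpha}(\boldsymbol{a}_u)=z$ is asserted, not proved. You claim that the order-$\alpha$ property, reformulated as linear independence of the rows of $C_1,\ldots,C_s$ indexed by position sets of $\mu_{\alpha}$-weight at most $\alpha m-t$, ``is exactly what prevents these low-order digits from being free.'' But the free positions in a coordinate $j$ carrying more than $\alpha$ nonzero digits are $\{1,\ldots,a_{j,\alpha}-1\}$, whose top-$\alpha$ position sum is about $\alpha a_{j,\alpha}$; for weight classes $z$ appreciably above the threshold $\alpha m-t$ one can have $a_{j,\alpha}$ well beyond $m$, so this set has $\mu_{\alpha}$-weight exceeding $\alpha m-t$ and the independence property you invoke simply does not apply to it. No quantitative bound on the number of admissible completions (``polynomially bounded of degree $\alpha d$ in the relevant range'') follows from what you have written, and since the rest of your argument is bookkeeping of geometric series and position choices, the entire content of the lemma is concentrated in the step you have deferred. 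As it stands this is a plan rather than a proof; to close it you would need to reproduce the actual counting argument of Dick (2008, Lemma~5.2) / Dick--Pillichshammer (2010, Lemma~15.20), or simply cite those results as the paper does. (A smaller quibble: your attribution of the $1/p$ summand in $E_{|u|,\alpha}$ specifically to the terms with $\boldsymbol{b}_u\neq\bszero$ is a guess; summing $p^{-\mu_{\alpha}(b)}$ over $b\geq 1$ produces a constant of the form $A_{\alpha}-1$ per coordinate, and you would need to track the constants to see which piece of $E_{|u|,\alpha}$ each contribution lands in.)
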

\begin{proof}
See \citet[Lemma~5.2]{D08} and \citet[Lemma~15.20]{DPbook} for the proof.
\end{proof}

\begin{lemma}\label{lem:mu_alpha}
For $n\in \NN$ and $\bsk,\bsl\in \NN_0^s$ with $\bsk<p^n$, we have
\begin{equation*}
\mu_{\alpha}(\bsk+p^n \bsl) \geq \mu_{\alpha}(\bsk)+\mu_{\alpha}(\bsl).
\end{equation*}
\end{lemma}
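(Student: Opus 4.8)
Since $\mu_\alpha$ on vectors is defined as the sum of its scalar values over the components, and the hypothesis $\bsk < p^n$ holds component-wise, the plan is to reduce at once to the one-dimensional statement: for $k,l\in\NN_0$ with $k<p^n$,
\[
\mu_\alpha(k+p^n l)\ \geq\ \mu_\alpha(k)+\mu_\alpha(l),
\]
after which summing over the $s$ components yields the claim. The key structural fact I would extract from $k<p^n$ is that the $p$-adic digits of $k$ occupy only positions $1,\dots,n$ (writing position $c$ for the coefficient of $p^{c-1}$), whereas those of $p^n l$ occupy positions $>n$; hence no carrying occurs, and the nonzero digit positions of $k+p^n l$ are exactly the positions $a_1>\cdots>a_{v_k}$ of $k$ (all $\le n$) together with the positions $b_1+n>\cdots>b_{v_l}+n$ of $l$ each shifted up by $n$ (all $>n$), where $v_k,v_l$ count the nonzero digits of $k,l$. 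In particular every shifted $l$-position strictly exceeds every $k$-position, so when $\mu_\alpha(k+p^n l)$ selects its $\min(\alpha,v_k+v_l)$ largest positions it exhausts the shifted $l$-block first; and since $\mu_\alpha(l)=\sum_{i=1}^{\min(\alpha,v_l)}b_i$, the contribution of the selected $l$-positions is $\mu_\alpha(l)$ plus a nonnegative multiple of $n$.

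The argument then splits according to whether the top $\alpha$ positions are already exhausted within the $l$-block. If $v_l\ge\alpha$, the $\alpha$ largest positions are all shifted $l$-positions, so $\mu_\alpha(k+p^n l)=\mu_\alpha(l)+\alpha n$, and the claim follows from the trivial bound $\mu_\alpha(k)\le\alpha n$ (a sum of at most $\alpha$ positions, each $\le n$). If $v_l<\alpha$, then all $v_l$ shifted $l$-positions are chosen, contributing $\mu_\alpha(l)+v_l n$, and the remaining budget is filled by the top $\min(\alpha-v_l,v_k)$ positions of $k$. The inequality then reduces to showing that the surplus $v_l n$ dominates the sum of those positions of $k$ that $\mu_\alpha(k)$ counts but $\mu_\alpha(k+p^n l)$ omits; since each omitted position is at most $n$ and one checks there are at most $v_l$ of them, the elementary bound $a_i\le n$ closes the gap.

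I expect the only genuine obstacle to be the bookkeeping in this last case: one must track precisely how many $k$-positions are counted by $\mu_\alpha(k)$ versus by $\mu_\alpha(k+p^n l)$ as a function of the relative sizes of $v_k$, $v_l$ and $\alpha$, and confirm that the deficit never exceeds $v_l$ positions, so that the $v_l n$ surplus always compensates. This amounts to a short but slightly fiddly subdivision (according to whether $v_k\le\alpha-v_l$, $\alpha-v_l<v_k<\alpha$, or $v_k\ge\alpha$), and no new idea beyond ``each position is at most $n$'' is required.
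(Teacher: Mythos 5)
Your proposal is correct and follows essentially the same route as the paper's proof: reduce to the scalar case, note that $k<p^n$ forces the digit positions of $k$ and of $p^n l$ to be disjoint with no carrying, split on whether $l$ has at least $\alpha$ nonzero digits, and in the remaining case bound the at most $v_l$ omitted positions of $k$ by $n$ each so that the surplus $v_l n$ absorbs them. The "fiddly subdivision" you flag is exactly the paper's observation that $\min(\alpha,v_k)-\min(\alpha-v_l,v_k)\leq v_l$, which holds in all three of your subcases.
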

\begin{proof}
Noting that 
\begin{equation*}
\mu_{\alpha}(\bsk+p^n \bsl) = \sum_{j=1}^s\mu_{\alpha}(k_j+p^nl_j)
\end{equation*}
and 
\begin{equation*}
\mu_{\alpha}(\bsk)+\mu_{\alpha}(\bsl) = \sum_{j=1}^s\left( \mu_{\alpha}(k_j)+\mu_{\alpha}(l_j) \right),
\end{equation*}
it suffices to prove the statement for the one-dimensional case:
\begin{equation*}
\mu_{\alpha}(k+p^n l) \geq \mu_{\alpha}(k)+\mu_{\alpha}(l), 
\end{equation*}
for any $k,l\in \NN_0$ with $k<p^n$. Since the result is trivial if $l=0$, we assume $l>0$. We denote the $p$-adic expansions of $k$ and $l$ by
\begin{align*}
k & = \kappa_1p^{c_1-1}+\cdots + \kappa_v p^{c_v-1}, \\
l & = \iota_1p^{d_1-1}+\cdots + \iota_w p^{d_w-1},
\end{align*}
with $\kappa_1,\ldots,\kappa_v,\iota_1,\ldots,\iota_w\in \{1,\ldots,p-1\}$, $c_1>\cdots>c_v>0$ and $d_1>\cdots>d_w>0$, respectively. Since $k<p^n$, we have $c_1\leq n$ and $v\leq n$. If $w< \alpha$, we have
\begin{align*}
 \mu_{\alpha}(k+p^n l) & = \mu_{\alpha}(\kappa_1p^{c_1-1}+\cdots + \kappa_v p^{c_v-1}+\iota_1p^{d_1+n-1}+\cdots + \iota_w p^{d_w+n-1}) \\
 & = \sum_{i=1}^{w}(d_i+n) + \sum_{i=1}^{\min(\alpha-w,v)}c_i \geq  \sum_{i=1}^{w}d_i + \sum_{i=1}^{\min(\alpha,v)}c_i  =  \mu_{\alpha}(l)+\mu_{\alpha}(k).
\end{align*}
On the other hand, if $w\geq \alpha$, we have
\begin{equation*}
 \mu_{\alpha}(k+p^n l) = \sum_{i=1}^{\alpha}(d_i+n) \geq \sum_{i=1}^{\alpha}d_i+\sum_{i=1}^{\min(\alpha,v)}c_i  =  \mu_{\alpha}(l)+\mu_{\alpha}(k).
\end{equation*}
Thus we complete the proof.
\end{proof}

Now we are ready to prove Theorem~\ref{thm:main}.

\begin{proof}[Proof of Theorem~\ref{thm:main}]
Let $f\in W_{s,\alpha,q,r}$. For each $0\leq i<\alpha$, Corollary~\ref{cor:euler-mac} gives
\begin{equation*}
I_{m+i}^{(1)}(f) = I(f)+\sum_{\bsl\in \NN_0^s}\sum_{\substack{\bsk\in (P^{[(m+i)\times (m+i)]})^{\perp}\\ \bszero\neq \bsk<p^{m+i}}}\hat{f}(\bsk+p^{m+i}\bsl)+\sum_{\tau=1}^{\alpha-1}\frac{c_\tau(f)}{p^{\tau (m+i)}}+R_{s,\alpha,m+i} .
\end{equation*}
Using the result shown in \citet[Lemma~2.9 \& Corollary~2.11]{DGYxx}, we have
\begin{equation*}
I_{m}^{(\alpha)}(f) = I(f) + \sum_{i=0}^{\alpha-1}w_i\left( \sum_{\bsl\in \NN_0^s}\sum_{\substack{\bsk\in (P^{[(m+i)\times (m+i)]})^{\perp}\\ \bszero\neq \bsk<p^{m+i}}}\hat{f}(\bsk+p^{m+i}\bsl)+R_{s,\alpha,m+i}\right), 
\end{equation*}
with
\begin{equation*}
w_i = \prod_{j=1}^{\alpha-i-1}\left( \frac{-1}{p^j-1}\right)\prod_{j=1}^{i}\left( \frac{p^j}{p^j-1}\right) \quad \text{for $0\leq i\leq \alpha-1$},
\end{equation*}
where the empty product is set to 1. Here we note that $\sum_{i=0}^{\alpha-1}w_i=1$, see \citet[Lemma~2.10]{DGYxx}. It follows from the triangle inequality and the decomposition
\begin{equation*}
(P^{[(m+i)\times (m+i)]})^{\perp}\setminus \{\bszero\} = \bigcup_{\emptyset \neq u\subseteq \{1,\ldots,s\}}(P^{[(m+i)\times (m+i)]})^{\perp}_u, 
\end{equation*}
that 
\begin{align*}
& \left| I_{m}^{(\alpha)}(f) -I(f)\right| \\
& \leq \sum_{i=0}^{\alpha-1}|w_i| \left( \sum_{\bsl\in \NN_0^s}\sum_{\substack{\bsk\in (P^{[(m+i)\times (m+i)]})^{\perp}\\ \bszero\neq \bsk<p^{m+i}}}| \hat{f}(\bsk+p^{m+i}\bsl)|+|R_{s,\alpha,m+i}|\right) \\
 & = \sum_{i=0}^{\alpha-1}|w_i| \left( \sum_{\emptyset \neq u\subseteq \{1,\ldots,s\}}\sum_{v\subseteq u}\sum_{\bsl_v\in \NN^{|v|}}\sum_{\substack{(\bsk_v,\bsk_{u\setminus v})\in (P^{[(m+i)\times (m+i)]})^{\perp}_u\\ \bszero\neq \bsk_v<p^{m+i}\\ 0< \bsk_{u\setminus v}<p^{m+i}}}| \hat{f}(\bsk_v+p^{m+i}\bsl_v, \bsk_{u\setminus v},\bszero)|+|R_{s,\alpha,m+i}|\right) \\
 & \leq \sum_{i=0}^{\alpha-1}|w_i| \left( \sum_{\emptyset \neq u\subseteq \{1,\ldots,s\}}\sum_{\bsl_u\in \NN_0^{|u|}}\sum_{\substack{\bsk_u\in (P^{[(m+i)\times (m+i)]})^{\perp}_u\\ \bszero \neq \bsk_u<p^{m+i}}}| \hat{f}(\bsk_u+p^{m+i}\bsl_u,\bszero)|+|R_{s,\alpha,m+i}|\right) .
\end{align*}

Using Lemmas~\ref{lem:walsh_decay} and \ref{lem:mu_alpha}, the inner double sum above for a given $\emptyset \neq u\subseteq \{1,\ldots,s\}$ is bounded by
\begin{align*}
\sum_{\bsl_u\in \NN_0^{|u|}}\sum_{\substack{\bsk_u\in (P^{[(m+i)\times (m+i)]})^{\perp}_u\\ \bszero \neq \bsk_u<p^{m+i}}}| \hat{f}(\bsk_u+p^{m+i}\bsl_u,\bszero)| & \leq \gamma_u \| f_u\|_{s,\alpha,q,r} C_{\alpha}^{|u|}\sum_{\bsl_u\in \NN_0^{|u|}}\sum_{\substack{\bsk_u\in (P^{[(m+i)\times (m+i)]})^{\perp}_u\\ \bsk_u<p^{m+i}}}p^{-\mu_{\alpha}(\bsk_u+p^{m+i}\bsl_u)} \\
& \leq \gamma_u \| f\|_{s,\alpha,q,r} C_{\alpha}^{|u|}\sum_{\bsl_u\in \NN_0^{|u|}}p^{-\mu_{\alpha}(\bsl_u)} \sum_{\substack{\bsk_u\in (P^{[(m+i)\times (m+i)]})^{\perp}_u\\ \bsk_u<p^{m+i}}}p^{-\mu_{\alpha}(\bsk_u)} .
\end{align*}
Applying \eqref{eq:trunc_dual} and Lemma~\ref{lem:HO_net}, the inner sum over $\bsk_u$ is bounded by
\begin{align*}
\sum_{\substack{\bsk_u\in (P^{[(m+i)\times (m+i)]})^{\perp}_u\\ \bsk_u<p^{m+i}}}p^{-\mu_{\alpha}(\bsk_u)} & = \sum_{\substack{\bsk_u\in (P^{[(m+i)\times (m+i)]})^{\perp}_u\\ \bsk_u<p^{m+i}}}p^{-\mu_{\alpha}(\bsk_u)} \leq \sum_{\bsk_u\in (P^{[\NN\times (m+i)]})^{\perp}_u}p^{-\mu_{\alpha}(\bsk_u)} \\
 & \leq E_{|u|,\alpha}\frac{(\alpha (m+i) -t+2)^{\alpha |u|}}{p^{\alpha(m+i)-t}} .
\end{align*}
Regarding the sum over $\bsl_u\in \NN_0^{|u|}$, \citet[Lemma~7]{G16} gives
\begin{equation*}
\sum_{\bsl_u\in \NN_0^{|u|}}p^{-\mu_{\alpha}(\bsl_u)} = \left( \sum_{l\in \NN_0}p^{-\mu_{\alpha}(l)}\right)^{|u|} = A_{\alpha}^{|u|}, 
\end{equation*}
where
\begin{equation*}
A_{\alpha} = 1+\sum_{w=1}^{\alpha-1}\prod_{i=1}^{w}\left( \frac{p-1}{p^i-1}\right)+ \left(\frac{p^{\alpha}-1}{p^{\alpha}-p}\right)\prod_{i=1}^{\alpha}\left( \frac{p-1}{p^i-1}\right). 
\end{equation*}
All together, the inner double sum for $\emptyset \neq u\subseteq \{1,\ldots,s\}$ is bounded by
\begin{equation*}
\sum_{\bsl_u\in \NN_0^{|u|}}\sum_{\substack{\bsk_u\in (P^{[(m+i)\times (m+i)]})^{\perp}_u\\ 0<\bsk_u<p^{m+i}}}| \hat{f}(\bsk_u+p^{m+i}\bsl_u,\bszero)| \leq \gamma_u \| f\|_{s,\alpha,q,r} A_{\alpha}^{|u|}C_{\alpha}^{|u|}E_{|u|,\alpha}\frac{(\alpha (m+i) -t+2)^{\alpha |u|}}{p^{\alpha(m+i)-t}} .
\end{equation*}

Recall that the total number of function evaluations is $N=p^m+\cdots+p^{m+\alpha-1}$. Using the above result and \eqref{eq:remainder_bound}, the integration error for $f\in W_{s,\alpha,q,r}$ is bounded by
\begin{align}
\left| I_{m}^{(\alpha)}(f) -I(f)\right|  & \leq  \sum_{i=0}^{\alpha-1}|w_i| \sum_{\emptyset \neq u\subseteq \{1,\ldots,s\}}\gamma_u \| f\|_{s,\alpha,q,r} \frac{p^tA_{\alpha}^{|u|}C_{\alpha}^{|u|}E_{|u|,\alpha}(\alpha (m+i) -t+2)^{\alpha |u|}+(\alpha+1)^{|u|}D_{\alpha}^{|u|}}{p^{\alpha(m+i)}} \nonumber \\
 & \leq \| f\|_{s,\alpha,q,r}\sum_{\emptyset \neq u\subseteq \{1,\ldots,s\}}\gamma_u \frac{\left( \log_p N\right)^{\alpha|u|}}{N^\alpha} \nonumber \\
 & \quad \times \left( p^t A_{\alpha}^{|u|}C_{\alpha}^{|u|}E_{|u|,\alpha}+(\alpha+1)^{|u|}D_{\alpha}^{|u|}\right)\sum_{i=0}^{\alpha-1}|w_i|\frac{N^\alpha}{p^{\alpha(m+i)}}\frac{(\alpha (m+i) -t+2)^{\alpha |u|}}{\left( \log_p N\right)^{\alpha|u|}}.\label{eq:error_bound_proof_1}
\end{align}
For any $0\leq i<\alpha$ we have
\begin{equation*}
\frac{N^\alpha}{p^{\alpha(m+i)}}\leq \frac{(\alpha p^{m+\alpha-1})^\alpha}{p^{\alpha(m+i)}}=\left( \alpha p^{\alpha+1-i}\right)^{\alpha},
\end{equation*}
and 
\begin{align*}
 (\alpha (m+i) -t+2)^{\alpha |u|} & \leq \left(\alpha (m+i+2/\alpha)\right)^{\alpha|u|} \leq \left(\alpha (m+i+1)\right)^{\alpha|u|} \\
 & \leq \left(2\alpha (m+i)\right)^{\alpha|u|} \leq \left(2\alpha \log_p N\right)^{\alpha|u|}.
\end{align*}
Thus the inner sum of \eqref{eq:error_bound_proof_1} is bounded independently of $m$ as
\begin{equation*}
\sum_{i=0}^{\alpha-1}|w_i|\frac{N^\alpha}{\left( \log_p N\right)^{\alpha|u|}}\frac{(\alpha (m+i) -t+2)^{\alpha |u|}}{p^{\alpha(m+i)}}\leq \left(2\alpha \right)^{\alpha|u|}\sum_{i=0}^{\alpha-1}|w_i|\left( \alpha p^{\alpha+1-i}\right)^{\alpha}.
\end{equation*}
This leads to a worst-case error bound:
\begin{equation*}
\sup_{\substack{f\in W_{s,\alpha,q,r}\\ \|f\|_{s,\alpha,q,r}\leq 1}}\left| I_{m}^{(\alpha)}(f) -I(f)\right| \leq \sum_{\emptyset \neq u\subseteq \{1,\ldots,s\}}\gamma_u U_{|u|,\alpha,t}\frac{\left( \log_p N\right)^{\alpha|u|}}{N^\alpha},
\end{equation*}
where
\begin{equation*}
U_{|u|,\alpha,t} = \left(2\alpha \right)^{\alpha|u|}\left( p^t A_{\alpha}^{|u|}C_{\alpha}^{|u|}E_{|u|,\alpha}+(\alpha+1)^{|u|}D_{\alpha}^{|u|}\right) \sum_{i=0}^{\alpha-1}|w_i|\left( \alpha p^{\alpha+1-i}\right)^{\alpha}.
\end{equation*}
Hence we complete the proof.
\end{proof}

\begin{remark}\label{rem:extensible}
Algorithm~\ref{alg:first} is naturally extensible with respect to $m$ in the following way: For $m_{\min}, m_{\max}\in \NN$, $m_{\max}-m_{\min}\geq \alpha$ and $f\colon [0,1)^s\to \RR$, do the following:
\begin{enumerate}
\item For $m_{\min}\leq  i\leq m_{\max}$, compute
\begin{equation*}
I_{i}^{(1)}(f) := I\left(f;P^{[i\times i]}\right).
\end{equation*}
\item For $1\leq \tau<\alpha$, let
\begin{equation*}
I_{i}^{(\tau+1)}(f) := \frac{p^\tau I_{i+1}^{(\tau)}(f)-I_{i}^{(\tau)}(f)}{p^\tau-1}\quad \text{for $m_{\min}\leq i\leq m_{\max}-\tau$}.
\end{equation*}
\end{enumerate}
Then we obtain a sequence of the approximate values $I_{m_{\min}}^{(\alpha)}, I_{m_{\min}+1}^{(\alpha)},\ldots,I_{m_{\max}-\alpha+1}^{(\alpha)}$. If one wants to increase $m_{\max}$ by 1, it suffices to compute $I_{m_{\max}+1}^{(1)}(f)$ instead of whole $I_{m_{\max}-\alpha+1}^{(1)}(f),\ldots,I_{m_{\max}+1}^{(1)}(f)$. This is a key advantage as compared to another possible algorithm which we introduce below.
\end{remark}

\subsection{Another possible algorithm}
It is clear from the proof of Theorem~\ref{thm:main} that, in order to vanish the main terms of \eqref{eq:euler-mac}, i.e.,
\begin{equation*}
\sum_{\tau=1}^{\alpha-1}\frac{c_\tau(f)}{p^{\tau n}} 
\end{equation*}
by applying Richardson extrapolation recursively, there is no need to set $n=m$ and to change them at the same time. In fact we can fix $m$ and change $n$ only, although the resulting algorithm is no longer extensible in $m$.
\begin{algorithm}\label{alg:second}
Let $P_{m,\alpha m}$ be an order $\alpha$ digital $(t,m,s)$-net over $\FF_p$ with generating matrices $C_1,\ldots,C_s\in \FF_p^{\alpha m\times m}$. For $f\colon [0,1)^s\to \RR$, do the following:
\begin{enumerate}
\item For $0\leq i<\alpha$, compute
\begin{equation*}
J_{m+i}^{(1)} := I\left(f;P_{m,\alpha m}^{[(m+i)\times m]}\right),
\end{equation*}
where $P_{m,\alpha m}^{[(m+i)\times m]}$ denotes a digital net with generating matrices $C_1^{[(m+i)\times m]}, \ldots, C_s^{[(m+i)\times m]}$.
\item For $1\leq \tau<\alpha$, let
\begin{equation*}
J_{m+i}^{(\tau+1)} := \frac{p^\tau J_{m+i+1}^{(\tau)}-J_{m+i}^{(\tau)}}{p^\tau-1}\quad \text{for $0\leq i<\alpha-\tau$}.
\end{equation*}
\item Return $J_{m}^{(\alpha)}$ as an approximation of $I(f)$.
\end{enumerate}
\end{algorithm}
We see that the total number of function evaluations used in $J_{m}^{(\alpha)}$ is $N=\alpha p^m$. Similarly to Algorithm~\ref{alg:first}, this alternative algorithm achieves the almost optimal rate of convergence as shown below. Since we can prove the result exactly in the same way as Theorem~\ref{thm:main}, we omit the proof.
\begin{theorem}\label{thm:main2}
Let $\alpha\in \NN$, $\alpha\geq 2$, and $1\leq q,r \leq \infty$. The worst-case error of the algorithm $J_m^{(\alpha)}(f)$ in $W_{s,\alpha,q,r}$ is bounded above by
\begin{equation*}
\sup_{\substack{f\in W_{s,\alpha,q,r}\\ \|f\|_{s,\alpha,q,r}\leq 1}}\left| I(f)-J_m^{(\alpha)}(f)\right| \leq \sum_{\emptyset \neq u \subseteq \{1,\ldots,s\}}\gamma_u V_{|u|,\alpha,t} \frac{(\log_p N)^{\alpha |u|}}{N^{\alpha}},
\end{equation*}
where $N=\alpha p^m$ and 
\begin{equation*}
V_{|u|,\alpha,t} = \left(\frac{\alpha}{\log_p 2} \right)^{\alpha|u|}\left( p^t A_{\alpha}^{|u|}C_{\alpha}^{|u|}E_{|u|,\alpha}+(\alpha+1)^{|u|}D_{\alpha}^{|u|}\right) \sum_{i=0}^{\alpha-1}|w_i|.
\end{equation*}
for all $\emptyset \neq u \subseteq \{1,\ldots,s\}$.
\end{theorem}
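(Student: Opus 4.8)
The plan is to follow the proof of Theorem~\ref{thm:main} almost verbatim, the only genuinely new feature being that here the \emph{number} of quadrature points is frozen at $p^m$ while only the \emph{precision} $n=m+i$ varies. First I would apply Corollary~\ref{cor:euler-mac} to each first-level estimate $J_{m+i}^{(1)}=I(f;P_{m,\alpha m}^{[(m+i)\times m]})$, viewing $P_{m,\alpha m}^{[(m+i)\times m]}$ as a digital net over $\FF_p$ with $p^m$ points and precision $n=m+i$. This gives
\begin{equation*}
J_{m+i}^{(1)} = I(f)+\sum_{\bsl\in \NN_0^s}\sum_{\substack{\bsk\in (P_{m,\alpha m}^{[(m+i)\times m]})^{\perp}\\ \bszero\neq \bsk<p^{m+i}}}\hat{f}(\bsk+p^{m+i}\bsl)+\sum_{\tau=1}^{\alpha-1}\frac{c_\tau(f)}{p^{\tau (m+i)}}+R_{s,\alpha,m+i}.
\end{equation*}
The crucial observation is that the low-order terms $\sum_{\tau=1}^{\alpha-1}c_\tau(f)/p^{\tau(m+i)}$ and the remainder $R_{s,\alpha,m+i}$ arise from the regular grid $P_{m+i}^*$ and therefore depend only on the precision $n=m+i$, not on how many digital-net points are used. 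Consequently the extrapolation in Algorithm~\ref{alg:second}, which is geometric in the precision parameter with ratio $p^\tau$, is structurally identical to that in Algorithm~\ref{alg:first}.

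Second, I would invoke the extrapolation identity of \citet[Lemma~2.9 \& Corollary~2.11]{DGYxx} with the same weights $w_i$, which satisfy $\sum_{i=0}^{\alpha-1}w_i=1$ by \citet[Lemma~2.10]{DGYxx}. Because the coefficients $c_\tau(f)$ are independent of $i$, the recursive Richardson step annihilates $\sum_{\tau=1}^{\alpha-1}c_\tau(f)/p^{\tau(m+i)}$ exactly as before, leaving
\begin{equation*}
J_m^{(\alpha)}-I(f)=\sum_{i=0}^{\alpha-1}w_i\left( \sum_{\bsl\in \NN_0^s}\sum_{\substack{\bsk\in (P_{m,\alpha m}^{[(m+i)\times m]})^{\perp}\\ \bszero\neq \bsk<p^{m+i}}}\hat{f}(\bsk+p^{m+i}\bsl)+R_{s,\alpha,m+i}\right).
\end{equation*}
Splitting the dual net over non-empty $u\subseteq\{1,\ldots,s\}$ and applying the triangle inequality then reproduces the chain of inequalities in the proof of Theorem~\ref{thm:main}.

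Third comes the one place where the two proofs diverge. The analogue of \eqref{eq:trunc_dual}, justified by Remark~\ref{rem:finiteness2} together with the fact that $m+i\leq\alpha m$ for $0\leq i<\alpha$ and $m\geq1$, identifies $\{\bsk\in (P_{m,\alpha m}^{[(m+i)\times m]})^{\perp}:\bsk<p^{m+i}\}$ with $\{\bsk\in P_{m,\alpha m}^{\perp}:\bsk<p^{m+i}\}$. Since the underlying net $P_{m,\alpha m}$ is the \emph{same} order $\alpha$ digital $(t,m,s)$-net for every $i$, Lemma~\ref{lem:HO_net} yields the bound $A_\alpha^{|u|}C_\alpha^{|u|}E_{|u|,\alpha}(\alpha m-t+2)^{\alpha|u|}/p^{\alpha m-t}$ on the inner double sum, after using Lemmas~\ref{lem:walsh_decay} and \ref{lem:mu_alpha} and the factorised sum $\sum_{\bsl_u}p^{-\mu_\alpha(\bsl_u)}=A_\alpha^{|u|}$. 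Unlike in Theorem~\ref{thm:main}, this bound does \emph{not} grow with $i$: the decay rate is $p^{-(\alpha m-t)}$ for all $\alpha$ estimates, reflecting that each of them rests on the same $p^m$ points.

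Finally, I would convert to the total budget $N=\alpha p^m$, so that $p^{\alpha m}=N^\alpha/\alpha^\alpha$, and use the elementary inequality $\alpha m-t+2\leq(\alpha/\log_p2)\log_p N$; this holds because $(\log_p N)/(\log_p2)=\log_2N$, $\log_2p\geq1$, and $\alpha\log_2\alpha\geq2$ for $\alpha\geq2$. Combining with the remainder bound \eqref{eq:remainder_bound} and collecting the (now $i$-independent) constants gives a worst-case bound of the claimed form $\sum_{\emptyset\neq u}\gamma_u V_{|u|,\alpha,t}(\log_p N)^{\alpha|u|}/N^\alpha$. I expect the main obstacle to be purely structural rather than computational: confirming that the Euler--Maclaurin low-order terms are governed solely by $n$, so that extrapolating in precision rather than in cardinality still cancels them, and checking that the truncation identity remains valid for the non-square matrices $C_j^{[(m+i)\times m]}$. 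Once these two points are secured, the remaining constant-chasing is identical to that in Theorem~\ref{thm:main}.
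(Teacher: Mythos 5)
Your proposal is correct and is essentially the paper's own (omitted) proof: the paper states that Theorem~\ref{thm:main2} is proved ``exactly in the same way'' as Theorem~\ref{thm:main}, and you correctly isolate the only points needing care, namely that the Euler--Maclaurin terms in Corollary~\ref{cor:euler-mac} depend only on the precision $n=m+i$ (so Richardson extrapolation in precision still annihilates them), that the truncation identity \eqref{eq:trunc_dual} carries over to the non-square matrices $C_j^{[(m+i)\times m]}$, and that Lemma~\ref{lem:HO_net} is now applied to the single net $P_{m,\alpha m}$ so the resulting bound is independent of $i$. One bookkeeping remark: since $p^{\alpha m}=N^{\alpha}/\alpha^{\alpha}$, carefully collecting constants along your route yields an extra factor $\alpha^{\alpha}$ multiplying $\sum_{i=0}^{\alpha-1}|w_i|$ that is absent from the printed $V_{|u|,\alpha,t}$; this affects only the constant, not the claimed rate $(\log_p N)^{\alpha|u|}/N^{\alpha}$.
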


\section{Numerical experiments}\label{sec:exp}
Finally we conduct some numerical experiments to confirm the effectiveness of our extrapolation-based quadrature rules. For all the experiments, we set the base $p=2$ and use a MATLAB implementation of higher order Sobol' nets and sequences from \citet{D07,D08}.

\subsection{Low-dimensional cases}
First let us consider the simplest case $s=1$. The test function we use is
\begin{equation*}
f_1(x)=x^3\left( \log x+\frac{1}{4}\right).
\end{equation*}
While the third derivative of $f_1$ is in $L_q([0,1))$ for any finite $q\geq 1$, the fourth derivative is not in $L_1([0,1))$, implying that $f_1\not\in W_{1,4,q,r}$ but $f_1\in W_{1,3,q,r}$.  Note that $I(f_1)=0$. Figure~\ref{fig:test1} shows the absolute integration error obtained by using Algorithm~\ref{alg:first} (and Remark~\ref{rem:extensible}) with $\alpha=2$ and $\alpha=3$. In both cases, the integration error of $I_m^{(1)}$ achieves the convergence of nearly order $N^{-1}$. We can see that the order of convergence of the integration error is improved from $N^{-1}$ to $N^{-2}$ by applying Richardson extrapolation. In case of $\alpha=3$, the recursive application of Richardson extrapolation further improves the order of convergence to approximately $N^{-3}$. This convergence behavior is in good agreement with our theoretical result.
\begin{figure}[t!]
\centering
\includegraphics[width=6cm]{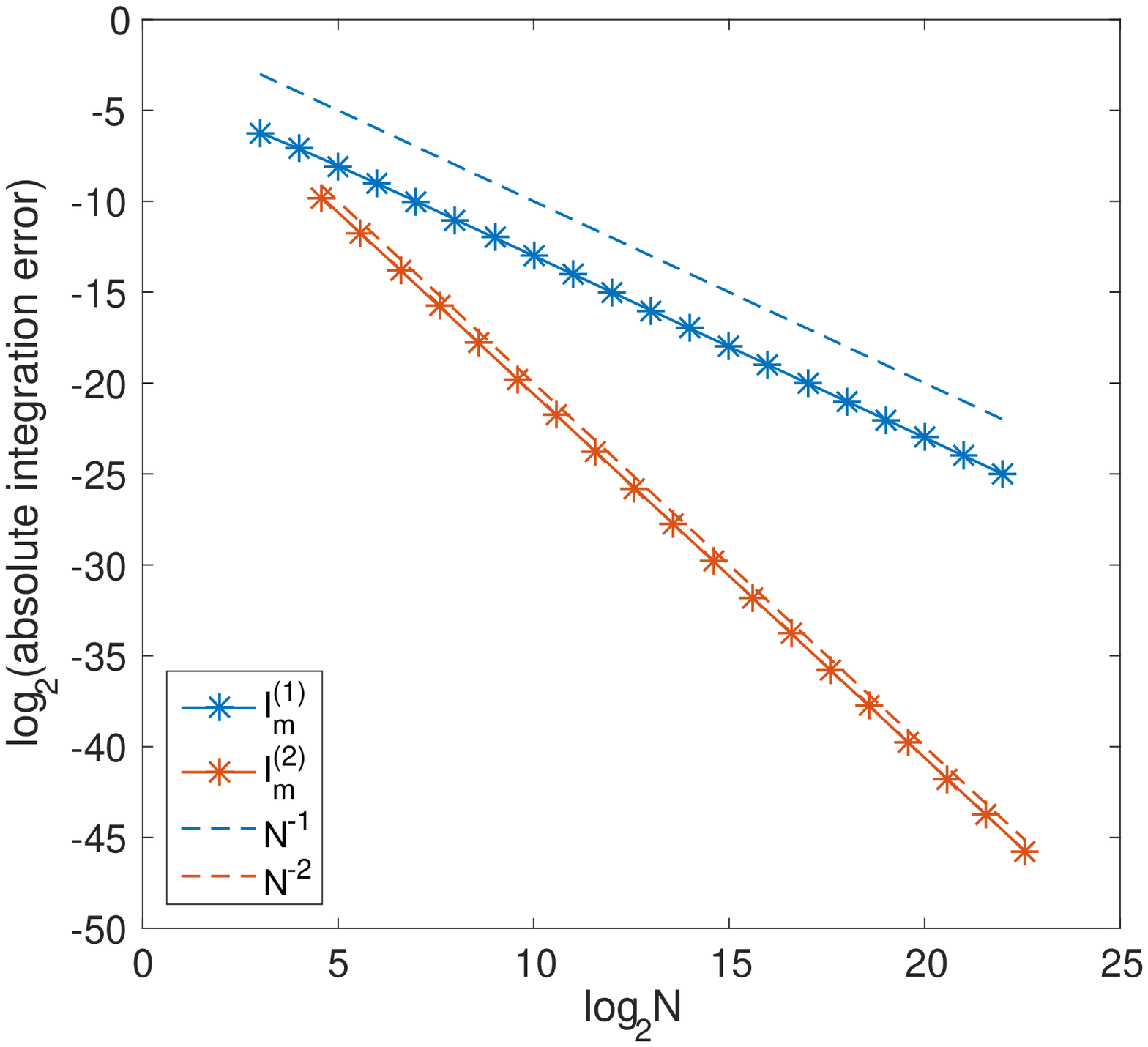}
\includegraphics[width=6cm]{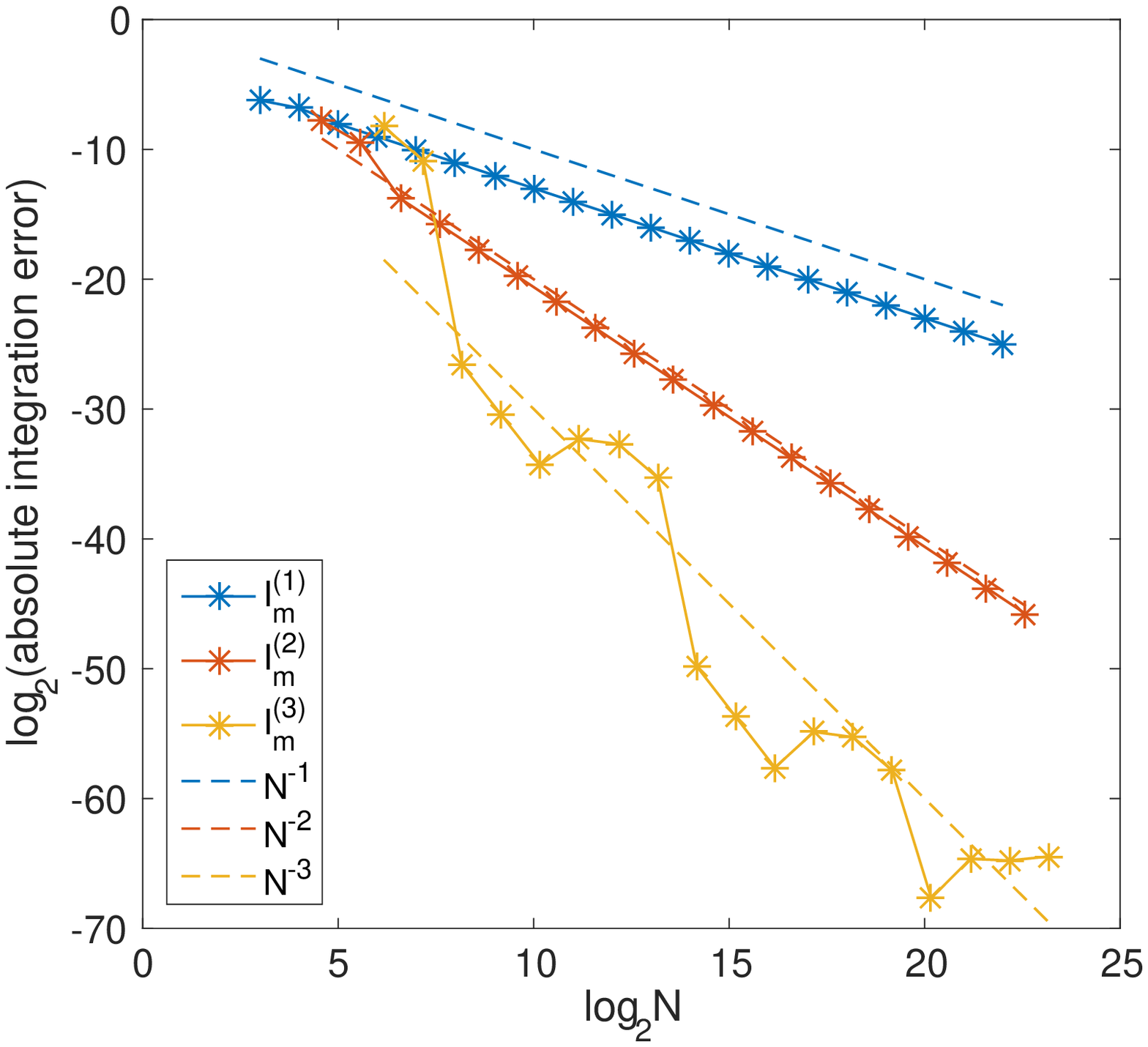}
\caption{Integration error for $f_1$: $\alpha=2$ (left) and $\alpha=3$ (right).}
\label{fig:test1}
\end{figure}

Next let us consider a bi-variate test function
\begin{equation*}
f_2(x,y) = \left( \frac{1}{2}-xy\right)^6 \bsone_{xy\leq 1/2}, 
\end{equation*}
where $\bsone_A$ denotes the indicator function of an event $A$. The derivative $f_2^{(3,3)}$ has a discontinuity along the curve $xy=1/2$ but is in $L_q([0,1)^2)$ for any $q$, which ensures $f_2\in W_{2,3,q,r}$. Note that we have 
\begin{equation*}
I(f_2) = \frac{1}{896}\left( \frac{363}{140}+\log 2\right).
\end{equation*}
Figure~\ref{fig:test2} shows the absolute integration error by Algorithm~\ref{alg:first} (and Remark~\ref{rem:extensible}) with $\alpha=2$ and $\alpha=3$. Similarly to the result for $f_1$, the integration errors of $I_m^{(1)}$ and $I_m^{(2)}$ achieve the convergence of nearly order $N^{-1}$ and $N^{-2}$, respectively. For the case $\alpha=3$, after the recursive application of Richardson extrapolation, the error decays asymptotically with the order $N^{-3}$. However, the magnitude of the error itself is almost comparable to that for $I_m^{(2)}$ in this range of $N$. In fact, as can be seen from the right plot of Figure~\ref{fig:test1-2_interlaced}, QMC rules using order 3 Sobol' sequences achieve the convergence of order $N^{-3}$ only asymptotically, and the performances of order 2 and 3 Sobol' sequences are comparable. This implies that, on the right-hand side of \eqref{eq:euler-mac}, $c_1(f)/p^n$ is the most dominant term, but $c_2(f)/p^{2n}$ is not the only secondary dominant term and is comparable to
\[ \sum_{\bsl\in \NN_0^s}\sum_{\substack{\bsk\in P_{m,n}^{\perp}\\ \bszero\neq \bsk<p^n}}\hat{f}(\bsk+p^n\bsl) .\]
This is why our Algorithm~\ref{alg:first} cannot achieve the desired rate of convergence for $\alpha=3$ when $m$ is not large enough. Thus, improving the performance of original higher order digital nets and sequences is important for our extrapolation-based rules to work well.
\begin{figure}[t!]
\centering
\includegraphics[width=6cm]{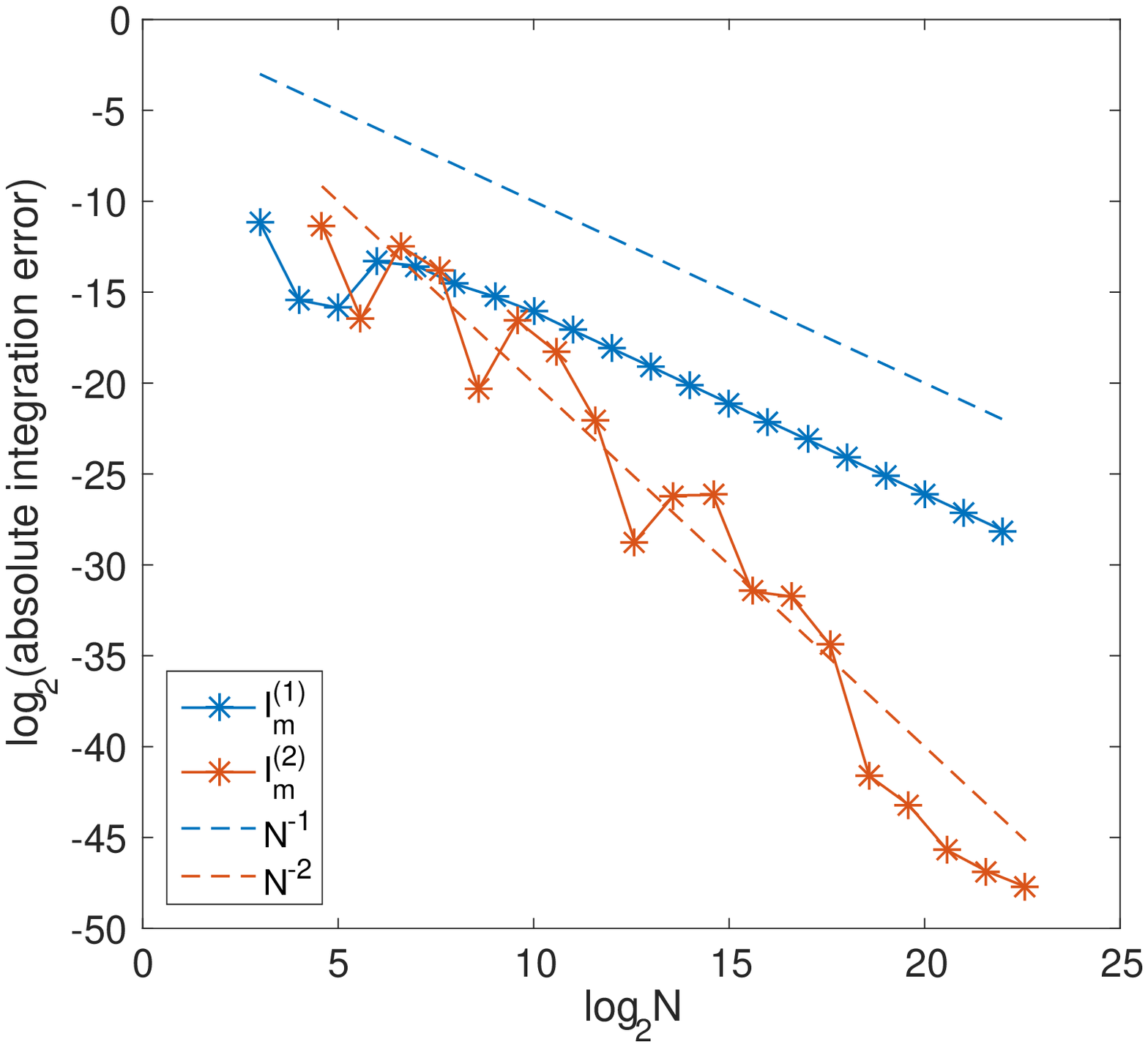}
\includegraphics[width=6cm]{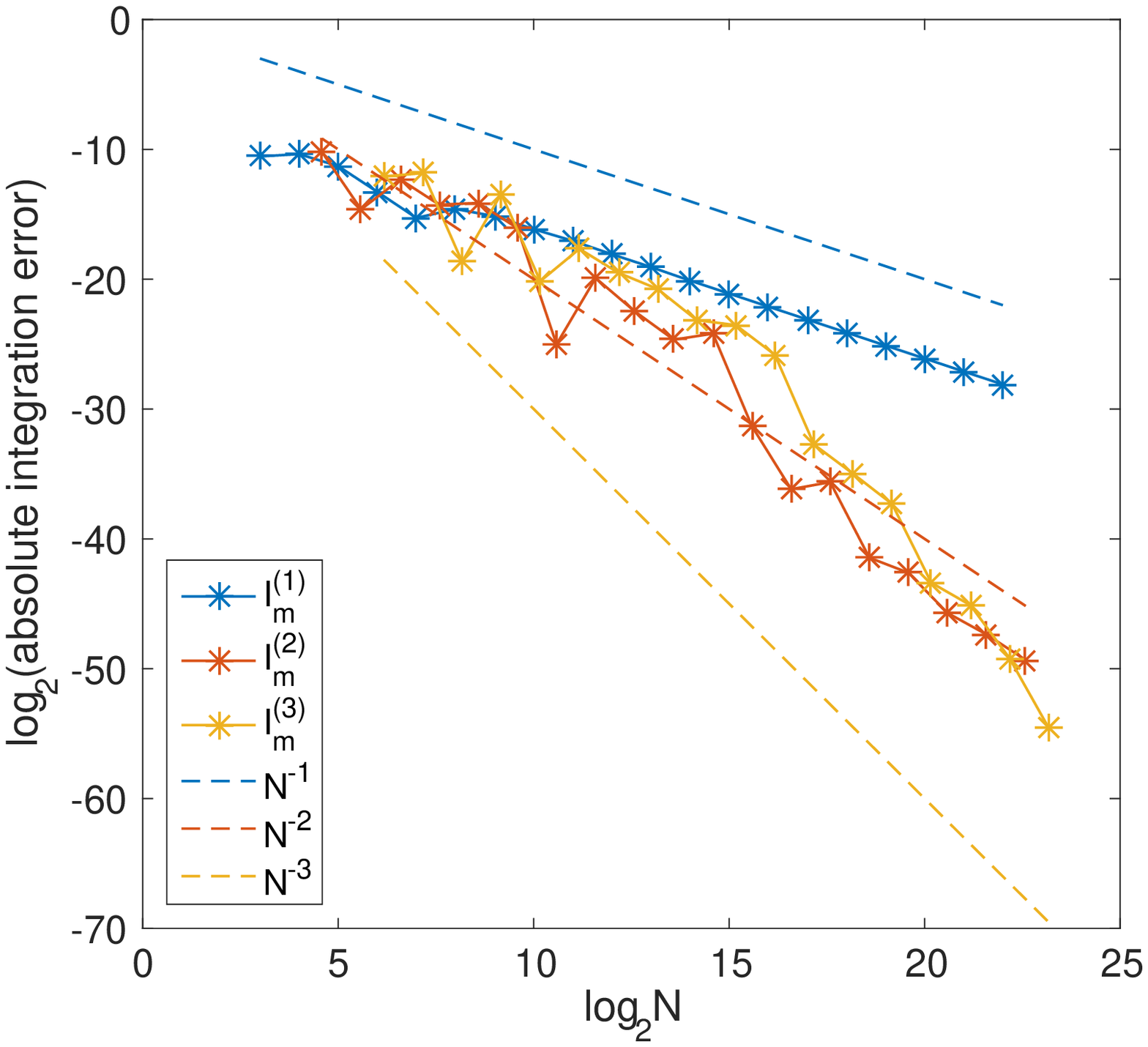}
\caption{Integration error for $f_2$: $\alpha=2$ (left) and $\alpha=3$ (right).}
\label{fig:test2}
\end{figure}
\begin{figure}[t!]
\centering
\includegraphics[width=6cm]{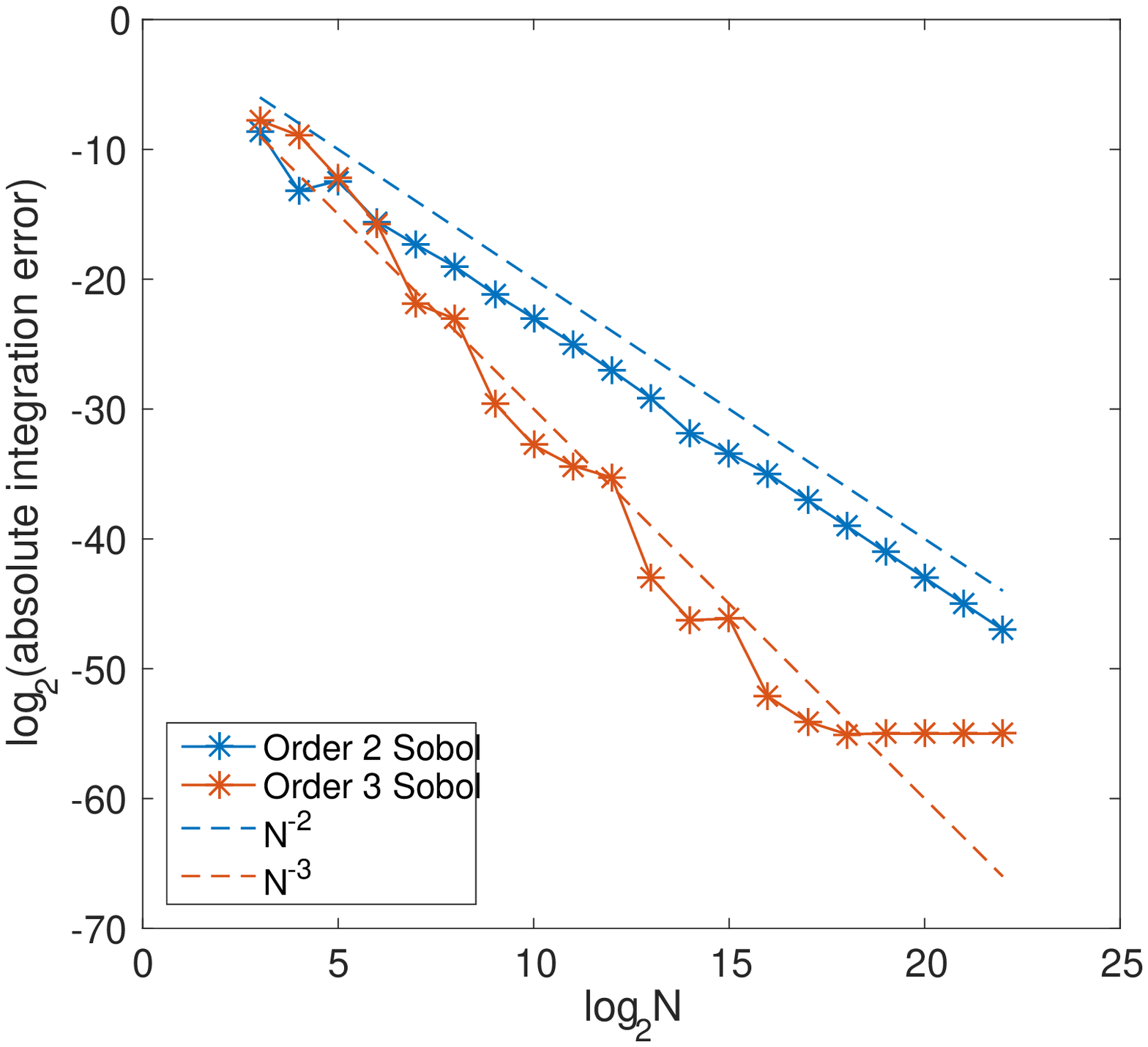}
\includegraphics[width=6cm]{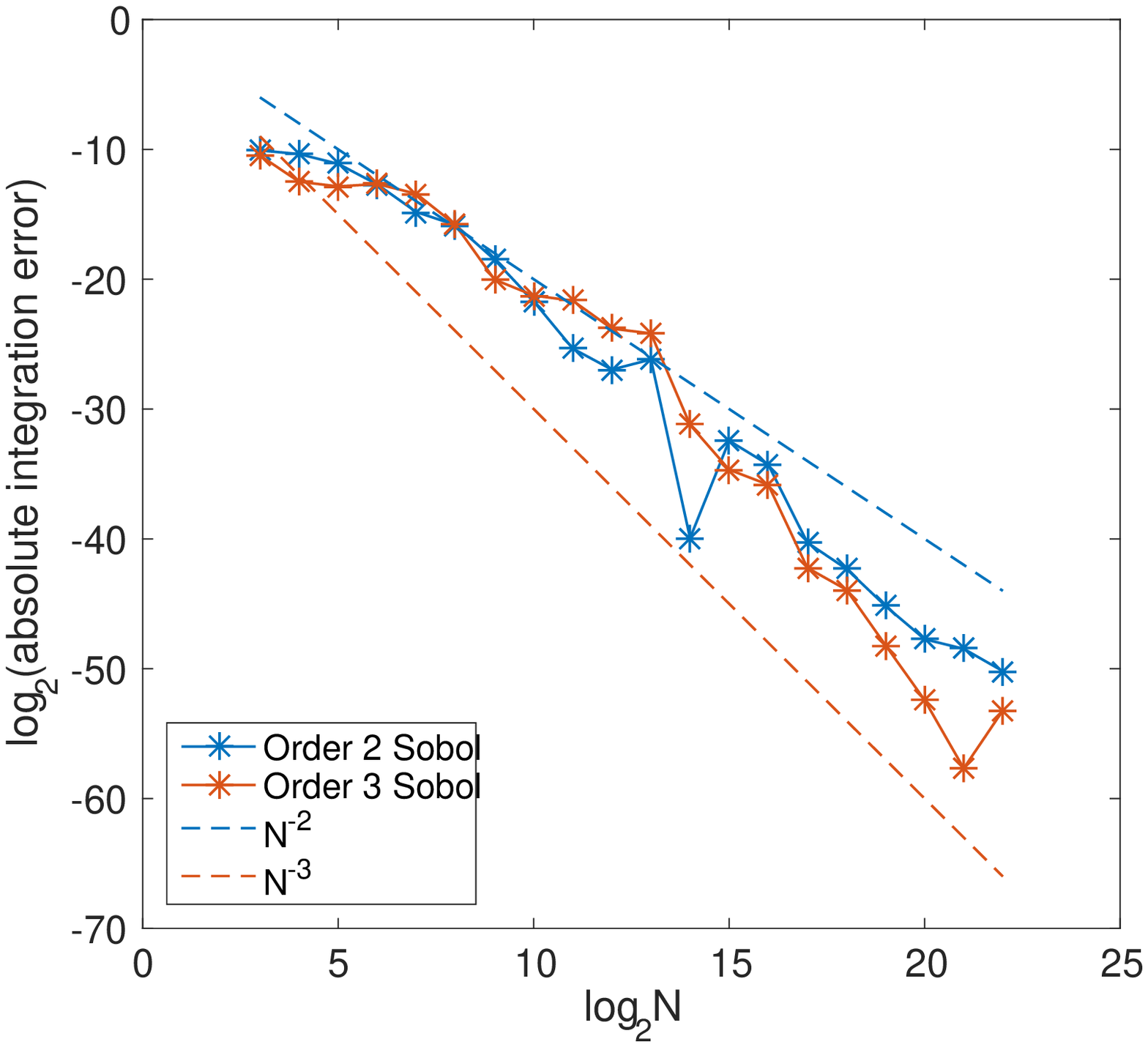}
\caption{Integration error by QMC rules using order 2 and 3 Sobol' sequences for $f_1$ (left) and $f_2$ (right). If $\alpha m > 52$, the truncation map $\tr_{52}$ is applied to all of the quadrature nodes.}
\label{fig:test1-2_interlaced}
\end{figure}
\begin{figure}[t!]
\centering
\includegraphics[width=6cm]{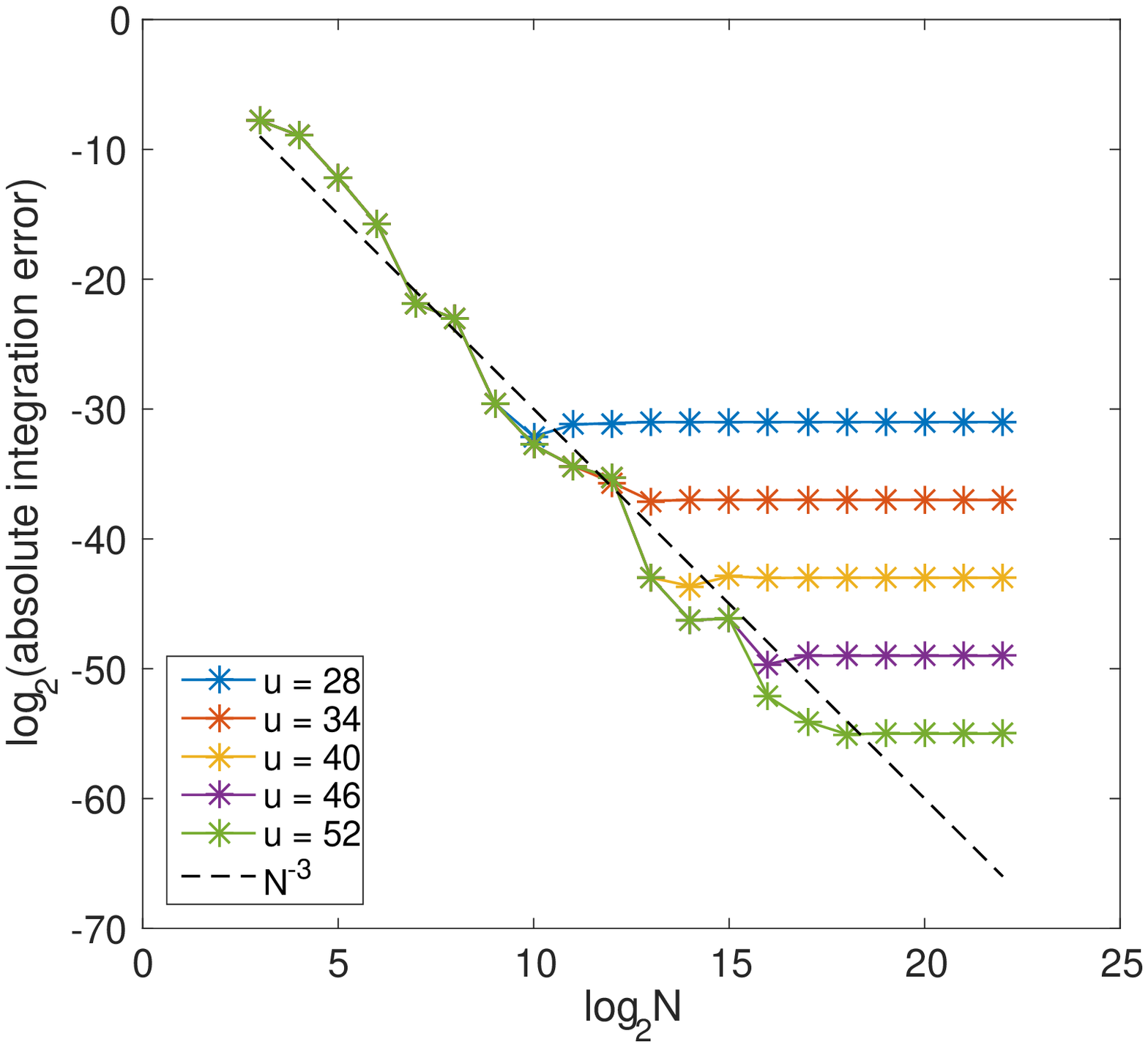}
\includegraphics[width=6cm]{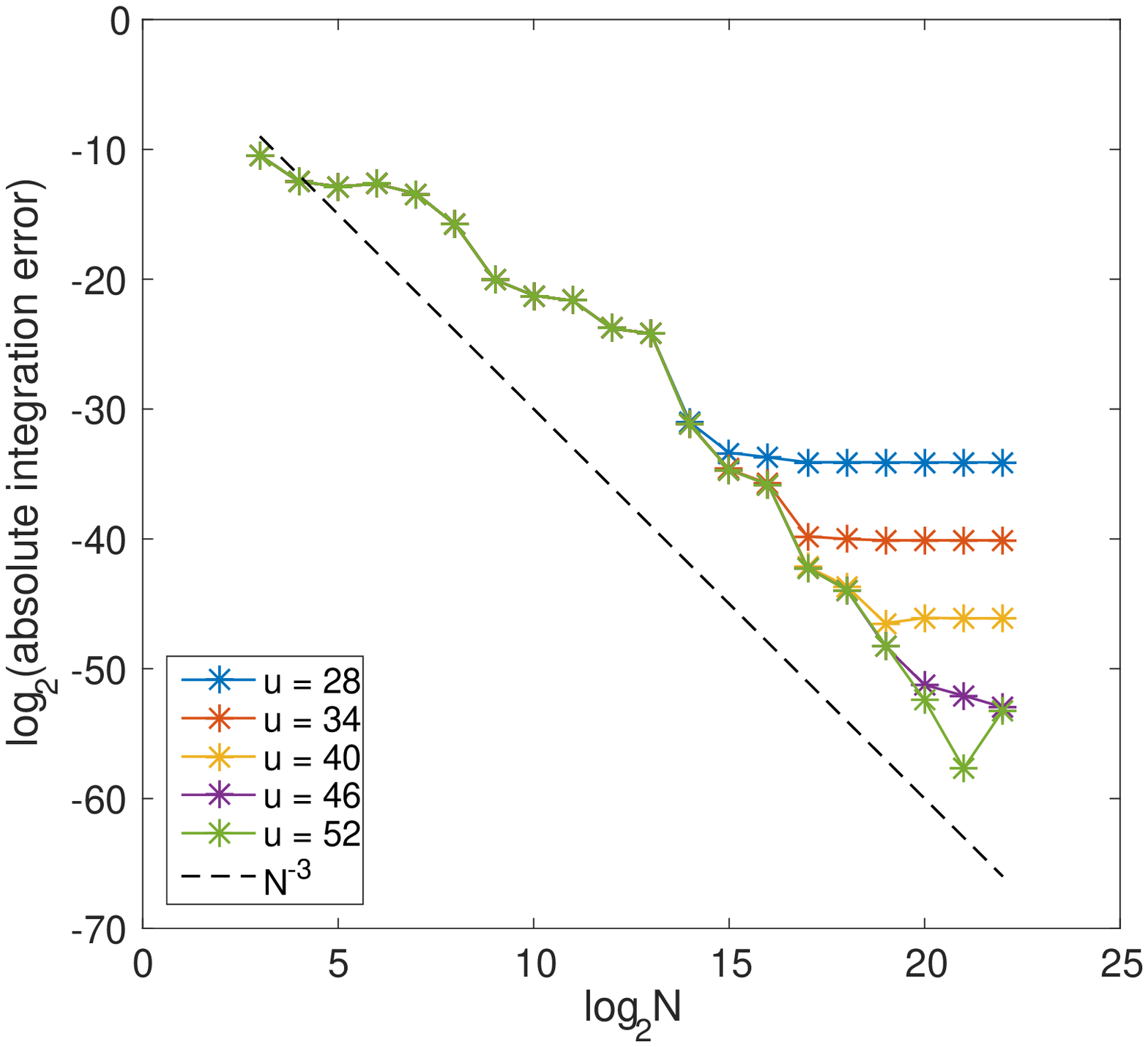}
\caption{Effect of precision $u$ for QMC rules using order 3 Sobol' sequences for $f_1$ (left) and $f_2$ (right). If $\alpha m > u$, the truncation map $\tr_{u}$ is applied to all of the quadrature nodes.}
\label{fig:test1-2_precision}
\end{figure}

When IEEE 754 double-precision floating-point format is employed, the first $2^m$ points of order 2 Sobol' sequences can be represented with full precision in this range of $m\in [3,22]$, whereas those of order 3 Sobol' sequences cannot for $m\geq 18$. As we see from Figure~\ref{fig:test1-2_interlaced}, the integration error for $f_1$ using order 3 Sobol' sequences remains almost the same for $m\geq 18$, which is considered to be the consequence of rounding the quadrature nodes. In fact, by changing the maximum precision from $52$ to lower values $u$, the switch of the convergence behavior from the $O(N^{-3})$ decay to the plateau happens for smaller $m$ as shown in the left plot of Figure~\ref{fig:test1-2_precision}. Since our extrapolation-based quadrature rules do not suffer from the rounding problem in this range of $m$, the error continues to decay even for $m\geq 18$ as is clear from the right plot of Figure~\ref{fig:test1}. However, such a switch of the convergence behavior for order 3 Sobol' sequences cannot be clearly observed for $f_2$, see the right plot of Figure~\ref{fig:test1-2_interlaced}. Changing the maximum precision from $52$ to lower values $u$ does yield such a switch but for larger $m$ as compared to the case for $f_1$. Hence whether or not the round-off error is comparable to the integration error when $\alpha m$ goes beyond an available precision depends on an integrand, and in general, it seems quite difficult to distinguish the round-off error from the integration error. Again we would like to emphasize that our extrapolation-based quadrature rules are free from such difficulty unless $m$ is large enough, say $m=52$, which is an important advantage compared to the original higher order digital nets and sequences.

\subsection{High-dimensional cases}
Let us move on to the high-dimensional setting. Following \citet{DGYxx} and \citet{GS16}, we consider the following two test functions:
\begin{align*}
f_3(\bsx) & = \prod_{j=1}^{s}\left[ 1+\gamma_j \left( x_j^{c_1}-\frac{1}{1+c_1}\right)\right], \\
f_4(\bsx) & = \exp\left( c_2\sum_{j=1}^{s}\gamma_j x_j\right), 
\end{align*}
with parameters $c_1>0,c_2\neq 0$, for which we have $I(f_3)=1$ and
\begin{equation*}
I(f_4)  = \prod_{j=1}^{s}\left[ \frac{1}{c_2\gamma_j}\left(\exp( c_2\gamma_j)-1\right)\right].
\end{equation*}
When $1<c_1<2$, the second derivative of the function $x\mapsto x^{c_1}$ is not absolutely continuous but is in $L_q([0,1))$ for any $q<1/(2-c_1)$, which means that $f_3\in W_{s,2,q,r}$ for $q<1/(2-c_1)$. On the other hand, $f_4$ is analytic and belongs to $W_{s,\alpha,q,r}$ for any $\alpha \geq 2$ and $q\geq 1$. As stated by \citet{GS16}, $f_4$ is designed to mimic the behavior of parametric solution families of partial differential equations. We employ this test function to see potential applicability to such problems. 

We put $s=100$ and $\gamma_j=j^{-2}$. We consider three quadrature rules: Algorithm~\ref{alg:first} with $\alpha=2$, denoted by $I_m^{(2)}$, Algorithm~\ref{alg:second} with $\alpha=2$, denoted by $J_m^{(2)}$, and QMC rules using order 2 Sobol' sequences. Figure~\ref{fig:test3-5} shows the comparison of the absolute integration errors obtained by these three algorithms. In fact, there is no decisive difference in performance between these algorithms, and all of them achieve the nearly desired rate of convergence, which is $O(N^{-2+\varepsilon})$ for arbitrarily small $\varepsilon>0$. This result not only supports our theoretical result, but also indicates that Richardson extrapolation allows truncation of higher order digital nets and sequences without sacrificing the practical performance of them even for high-dimensional cases.
\begin{figure}[t!]
\centering
\includegraphics[width=6cm]{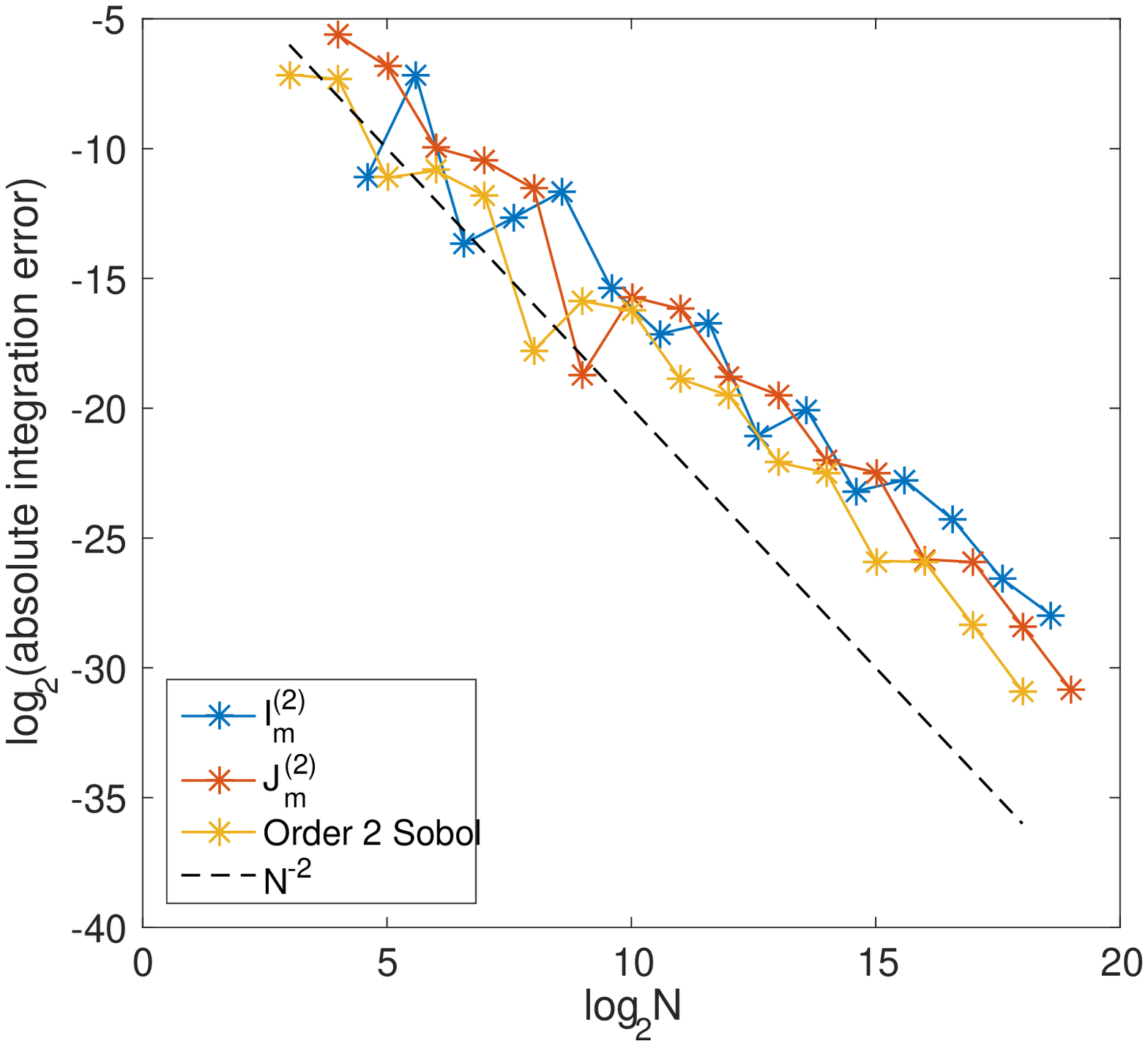}
\includegraphics[width=6cm]{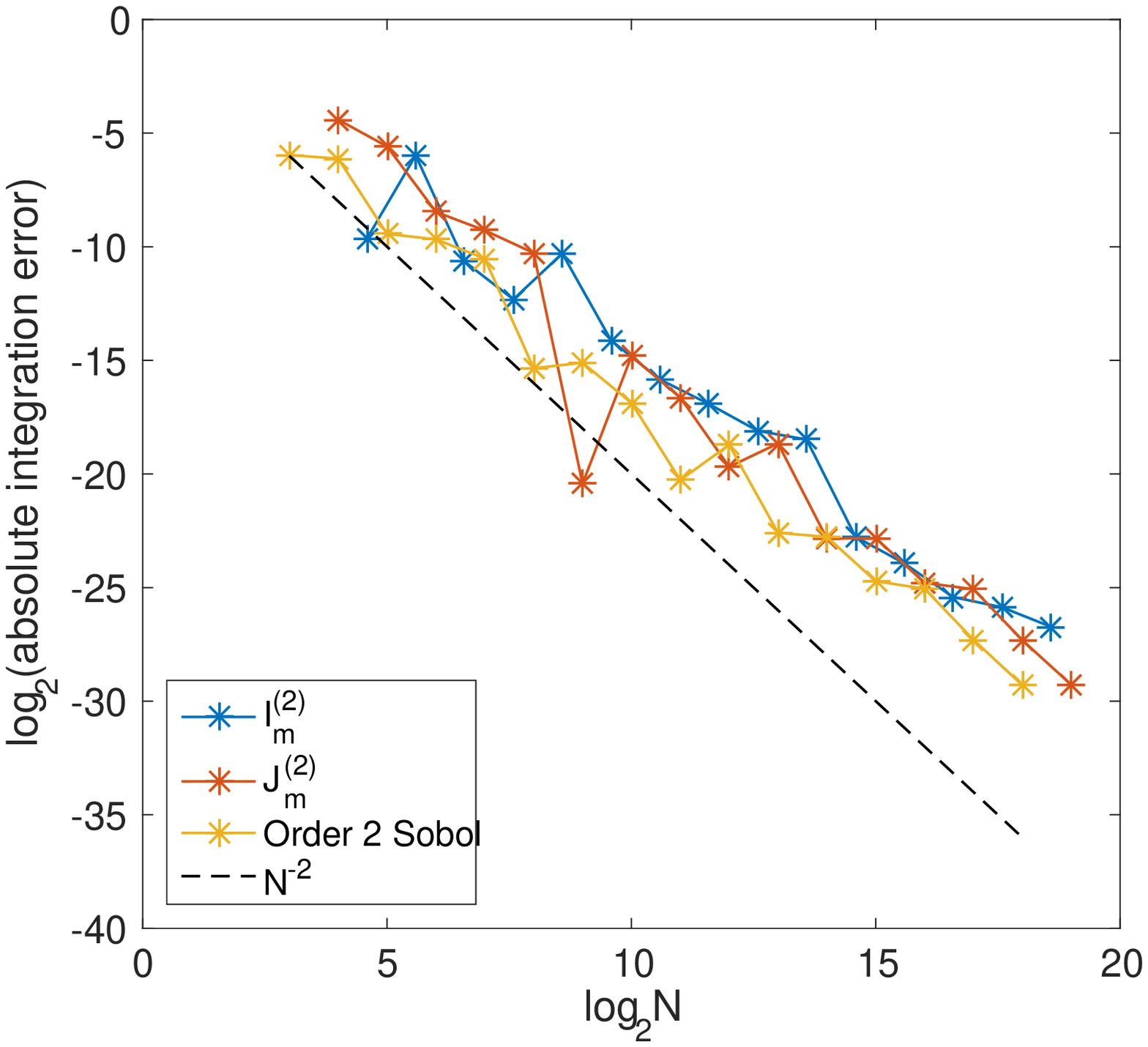}
\caption{Comparison of integration errors by three algorithms for $f_3$ with $c_1=1.3$ (left) and $f_4$ with $c_2=1$ (bottom).}
\label{fig:test3-5}
\end{figure}

\section*{Acknowledgements}
The author would like to thank Tomohiko Hironaka and Takehito Yoshiki for useful discussions and comments. The comments and suggestions made by the anonymous referees improving the exposition of this paper are greatly appreciated.

\end{document}